\newcommand{\bM}{\bar{M}}
\newcommand{\bfR}{\hbox{\bbbld R}}
\newtheorem{thm}{Theorem}
\newtheorem{lem}[thm]{Lemma}
\newtheorem{remark}{Remark}
\font\bbbld=msbm10 scaled\magstephalf
	\newcommand{\ul}{\underline}
\begin{document}
	
	\title{On estimates for augmented Hessian type parabolic
		equations on Riemannian manifolds}
	
\author{Yang Jiao}
\address{School of Mathematics, Harbin Institute of Technology,
	Harbin, Heilongjiang 150001, China}
\email{18b912015@stu.hit.edu.cn}
	
	\begin{abstract}
		The author extends
		previous results to general classes of equations under weaker assumptions obtained in
		2016 by Bao, Dong and Jiao concerning the study of the regularity of solutions for the
		first initial-boundary value problem for parabolic Hessian equations on Riemannian manifolds.
		
	\end{abstract}
	
	\keywords{
		{fully nonlinear parabolic equations; \emph{A priori} $C^{2}$ estimates;
			augmented Hessian equations; the first initial-boundary value problem}
	}
	
	\maketitle
	
	\section{Introduction}
	
	Let $(M^n, g)$ be
	a compact Riemannian manifold of
	dimension $n \geq 2$ with smooth boundary $\partial M$ and $\bM:= M \cup \partial M$.
	Define
	$M_T = M \times (0,T] \subset M \times \mathbb{R}$, $\mathcal{P} M_T
	= B M_T \cup S M_T$ is the parabolic boundary of $M_T$ with $B M_T = M
	\times \{0\}$ and $S M_T = \partial M \times [0,T]$.
	In \cite{BaoDongJiao.2016}, the authors derived $C^{2}$ estimates for solutions of
	the first initial-boundary value problem of parabolic Hessian equations in the form
	\begin{equation}
		\label{BDJeqn}
		f (\lambda(\nabla^{2}u + \chi (x, t)),-u_{t})= \psi(x, t) ,
	\end{equation}
	where $f$ is a symmetric smooth function of $n+1$ variables.

	In this paper, we apply an exponential barrier from \cite{JiangTrudinger.2020} where Jiang-Trudinger treat the corresponding elliptic problems in $\mathbb{R}^{n}$ to study \eqref{BDJeqn} in the general augmented Hessian form
	\begin{equation}
		\label{eqn}
		f (\lambda(\nabla^{2}u + A (x, t, \nabla u)), - u_t)  = \psi (x, t, \nabla u)
	\end{equation}
	in $M_T$ with boundary condition
	\begin{equation}
		\label{eqn-bd}
		u = \varphi \mbox{ on }\mathcal{P} M_T,
	\end{equation}
	where $\nabla^{2}u + A (x, t, \nabla u)$ is called augmented Hessian,
	$\nabla u$ and $\nabla^2 u$ denote the gradient and the
	Hessian of $u (x, t)$ with respect to $x \in M$ respectively, $u_t = D_t u$ is the derivative
	of $u (x, t)$ with respect to $t \in [0, T]$,
	$A [u] = A (x, t, \nabla u)$ is a $(0,2)$ tensor on
	$\overline{M}$ which may depend on $t \in [0, T]$ and $\nabla u$,
	and
	\[ \lambda (\nabla^2 u + A [u]) = (\lambda_1 ,\ldots,\lambda_n) \]
	denotes the eigenvalues of $\nabla^2 u + A [u]$ with respect to the metric $g$.
	
	As in \cite{Jiao.2015}, throughout the paper we assume $A [u]$ is smooth on $\overline{M_T}$ for $u \in C^{\infty} (\overline{M_T})$,
	$\psi \in C^{\infty} (T^*\overline{M} \times [0,T])$. We shall write
	$\psi = \psi (x, t, p)$ for $(x, p) \in T^*\overline{M}$ and $t \in [0, T]$.
	Note that for fixed $(x, t) \in \overline{M_T}$ and $p \in T^*_x M$,
	\[
	A (x, t, p): T^*_x M \times T^*_x M \rightarrow \bfR
	\]
	is a symmetric bilinear map. We shall use the notation
	\[
	A^{\xi \eta} (x, t, \cdot) := A (x, t, \cdot) (\xi, \eta), \;\;
	\xi,  \eta \in T^*_x M.
	\]
	For a function $v \in C^{2} (M_T)$, we write $A [v] := A (x, t, \nabla v)$,
	$A^{\xi \eta} [v] := A^{\xi \eta} (x, t, \nabla v)$ and $\psi[u]:=\psi(x, t, \nabla u)$.
	
	There are many different $A$ in conformal geometry, the optimal transportation satisfies, the isometric embedding, reflector design and other research fields, we recommend readers see subsection 3.8 in \cite{TrudingerWang2008} and references therein for the Monge-Amp\`ere type equations arising in applications.
	
	We are concerned in this work with the \emph{a priori} estimates of admissible solutions
	to \eqref{eqn} with boundary condition.
	The use of the exponential barrier allows us to relax the concavity assumption of $A$ to Ma-Trudinger-Wang conditions(see \cite{MaTrudingerWang.2005}).  By the perturbation method of subsolutions
	in \cite{JiangTrudinger.2020} (see Remark 2.2 in \cite{JiangTrudingerYang.2014} for details), we can obtain strict subsolutions from non-strict subsulutions which simplifies
	the proofs and relaxes some restrictions to $f$ in the estimates of $|u_{t}|$.
	
	Our treatment here will also work for parabolic equations in the form
	\begin{equation}\label{eqn'}
		f (\lambda(\nabla^{2}u + A (x, t, \nabla u))) - u_t  = \psi (x, t, \nabla u)
	\end{equation}
	with slight modification.
	Note that we do not require \emph{a priori} bound
	of $|u_{t}|$ in the study of \eqref{eqn'}.

	The idea of this paper is mainly from Guan-Jiao \cite{GuanJiao.2016}
	and Jiang-Trudinger \cite{JiangTrudinger.2020} where those authors studied
	the second order estimates for the elliptic counterpart of \eqref{eqn}:
	\begin{equation}
		\label{3I-10}
		f (\lambda (\nabla^2 u + A (x, u, \nabla u)))
		= \psi (x, u, \nabla u).
	\end{equation}

	The first initial-boundary value problem for equation of form \eqref{eqn'} in $\mathbb{R}^n$
	with $A \equiv 0$ and $\psi = \psi (x,t)$ was studied by Ivochkina-Ladyzhenskaya in \cite{IvochkinaLadyzhenskaya.1994}
	(when $f = \sigma_n^{1/n}$) and \cite{IvochkinaLadyzhenskaya.1995}. In recent years, Jiao-Sui \cite{JiaoSui.2014} treated the case that
	$A \equiv \chi (x,t)$ and $\psi = \psi (x,t)$ on Riemannian manifolds and
	Jiao \cite{Jiao.2015} extend their results to the form
	$$
	f (\lambda(\nabla^{2}u + A (x, t, \nabla u))) - u_t  = \psi (x, t, u, \nabla u)
	$$
	by the method using in the corresponding elliptic problems.
	
	Krylov in \cite{Krylov.1976} treated \eqref{eqn} in the parabolic Monge–Amp\`{e}re form
	$$
	-u_{t}\det(\nabla^{2}u+A)=\psi^{n+1}
	$$
	in $\mathbb{R}^n$, where $A \equiv 0$ and $\psi = \psi (x,t)$.
	In \cite{Lieberman.1996}, Lieberman studied the first initial–boundary value problem of \eqref{eqn} when $A = 0$ and $\psi$ may
	depend on $u$ and $\nabla u$ in a bounded domain under various conditions.

	For the elliptic Hessian equations, we refer
	the readers to Li \cite{Li.1990}, Urbas \cite[367--377]{Urbas.2002}, Guan \cite{Guan.1999,Guan.2014}, Guan-Jiao \cite{GuanJiao.2015} , Jiang-Trudinger \cite{JiangTrudinger.2020} and
	their references.

	Following \cite{CaffarelliNirenbergSpruck.1985}, in which the authors studied the corresponding elliptic equations
	in $\mathbb{R}^n$, $f \in C^\infty (\Gamma) \cap C^0 (\overline{\Gamma})$
	is assumed to be defined on $\Gamma$, where $\Gamma$ is an open, convex,
	symmetric proper subcone of $\mathbb{R}^{n+1}$ with vertex at the origin and
	\[\Gamma^{+} \equiv \{\lambda \in \mathbb{R}^{n+1}:\mbox{ each component }
	\lambda_{i} > 0\} \subseteq \Gamma,\]
	and to satisfy the following structure conditions in this paper:
	\begin{equation}
		\label{f1}
		f_{i} \equiv \frac{\partial f}{\partial \lambda_{i}} > 0 \mbox{ in } \Gamma,\ \ 1\leq i\leq n+1,
	\end{equation}
	\begin{equation}
		\label{f2}
		f\mbox{ is concave in }\Gamma,
	\end{equation}
	and
	\begin{equation}
		\label{f5}
		\delta_{\psi , f}\equiv\inf_{M_T} \psi -\sup_{\partial \Gamma} f>0 ,
		\ \  \mbox{where} \; \sup_{\partial \Gamma} f\equiv
		\sup_{\lambda_{0}\in \partial \Gamma} \limsup_{\lambda\rightarrow\lambda_{0}} f(\lambda) .
	\end{equation}

	Typical examples are $f = \sigma^{1/k}_k$ and $f = (\sigma_k / \sigma_l)^{1/(k - l)}$, $1 \leq l < k \leq n$,
	defined in the cone
	$$
	\Gamma_{k} = \{\lambda \in \mathbb{R}^{n}: \sigma_{j} (\lambda) > 0, j = 1, \ldots, k\}
	$$
	and
	$f = (\mathcal{M}_k)^{1/\binom{n}{k}}$ defined in
	$$
	M_k  = \{\lambda \in \bfR^n:
	\lambda_{i_1} + \cdots + \lambda_{i_k} > 0\},
	$$
	where $\sigma_{k} (\lambda)$ are the $k$th elementary symmetric functions and
	$\mathcal{M}_k$ are the $p$-plurisubharmonic functions defined by
	$$
	\sigma_{k} (\lambda) = \sum_ {i_{1} < \ldots < i_{k}}
	\lambda_{i_{1}}\cdots \lambda_{i_{k}}, \;\; 1 \leq k \leq n
	$$
	and
	$$
	\mathcal{M}_k (\lambda) = \prod_{i_1 < \cdots < i_k}
	(\lambda_{i_1} + \cdots + \lambda_{i_k}), \;\; 1 \leq k \leq n
	$$
	respectively.
	When $k=n$, $f=\sigma_{n}^{\frac{1}{n}}$ is the famous Monge-Amp\`ere equation arising in many research fields such as
	conformal geometry,  optimal transportation, isometric embedding and reflector designs, see the survey \cite{TrudingerWang2008} and references therein.
	
	We define a function $u(x,t)$ to be admissible if $(\lambda(\nabla^{2} u + A[u]), -u_t) \in \Gamma$ in $M \times [0, T]$.
	It is shown in \cite{CaffarelliNirenbergSpruck.1985} that \eqref{f1} ensures that Eq \eqref{eqn}
	is parabolic for admissible solutions. \eqref{f2} means that the function $F$ defined
	by $F (A, \tau) = f (\lambda [A], \tau)$ is concave for $(A, \tau)$ with
	$(\lambda [A], \tau) \in \Gamma$, where $A$ is in the set of $n \times n$
	symmetric matrices $\mathcal{S}^{n \times n}$. Moreover, when $\{U_{ij}\}$ is diagonal so is $\{F^{ij}\}$, and the
	following identities hold
	\[   F^{ij} U_{ij} = \sum f_i \lambda_i, \;\; F^{ij} U_{ik} U_{kj} = \sum f_i \lambda_i^2,
	\;\; \lambda (U) = (\lambda_1, \ldots, \lambda_n). \]
	
	We define a function $\overline{u}$ to be a admissible viscosity supersolution of
	\eqref{eqn}
	if
	$$
	f (\lambda(\nabla^{2}\phi(\hat{x}, \hat{t}) + A (\hat{x}, \hat{t}, \nabla \phi(\hat{x}, \hat{t})), - \phi_t(\hat{x}, \hat{t}))
	\leq \psi (\hat{x}, \hat{t}, \nabla \phi(\hat{x}, \hat{t}))
	$$
	whenever $\phi\in C^{2} (M_T)$ is a admissible function and $(\hat{x}, \hat{t})\in M_T$ is a local minimum of $\overline{u}-\phi$.

	In this paper
	we assume that there exists an admissible function
	$\underline{u} \in C^{2} (\bM_T)$ satisfying
	\begin{equation}
		\label{sub}
		\left\{ \begin{aligned}
			f(\lambda(\nabla^{2} \underline{u} + A [\ul u]),  - \underline{u}_{t})
			& \geq \psi(x, t, \nabla \ul u) &&\mbox{ in } M \times [0, T], \\
			& \ul u = \varphi               && \mbox{ on } \partial M \times [0, T],\\
			& \ul u \leq \varphi            && \mbox{ on } M \times \{0\}.
		\end{aligned} \right.
	\end{equation}
	
	A $(0,2)$ tensor $B$ is called regular (strictly regular), if
	$$
	\sum_{i,j,k,l}^{n}B^{ij}_{p_{k},p_{l}}(x, t, p)\xi_{i}\xi_{j}\eta_{k}\eta_{l}\geq 0(>0)
	$$
	for all $(x,t,p)\in M\times [0,T] \times \bfR^{n}$, $\xi,\eta\in T^{*}_{x}M$ and $g(\xi, \eta)=0$.
	
	The regular condition, well known as MTW condition, was first introduced by Ma, Trudinger and Wang in \cite{MaTrudingerWang.2005} for the study of optimal transportation in its strict form, and used in
	\cite{Andriyanova.2019}, \cite{JiangTrudinger.2020} and other relevant problems. It is natural to consider MTW conditions instead of normal concavity assumptions on $A$.
	Examples in \cite{MaTrudingerWang.2005} shows that there exists a tensor $A$, without convexity respect to $p$,
	derived from special cost functions satisfying this regular condition. There are many results about MTW conditions, see, for instance, \cite{DuLi.2014,FigalliRiffordVillani.2010,FigalliRiffordVillani.2011,GoodrichScapellato.2022, LoeperTrudinger.2021,Ragusa.2001}
	and references therein.
	
	We now begin to formulate the main theorems of this paper.
	\begin{thm}
		\label{jjy-th1}
		Let $u \in C^4 (\bM_T)$ be an
		admissible solution of (\ref{eqn}).
		Suppose \eqref{f1}--\eqref{f5} and \eqref{sub} hold.
		Assume, in addition, that
		\begin{equation}
			\label{A2}
			\mbox{$\psi (x,t,p)$ is convex
				in $p$},
		\end{equation}
		\begin{equation}\label{A3w}
			\mbox{$-A^{\xi \xi} (x,t,p)$ is regular},
		\end{equation}
		then
		\begin{equation}
			\label{hess-a10}
			\max_{\bM_T} |\nabla^2 u| \leq
			C_1 \big(1 + \max_{\mathcal{P} M_T}|\nabla^2 u|\big),
		\end{equation}
		where $C_1 > 0$ depends on $|u|_{C^1 (\bM_T)}$, $|u_{t}|_{C^0 (\bM_T)}$ and $|\ul u|_{C^2 (\bM_T)}$.
		Suppose that $u$ also satisfies the boundary condition
		\eqref{eqn-bd} and, in addition,
		assume that there exists a function $\Theta \in C^2(BM_T)$
		such that $\Theta=-\varphi_t$ on $\partial M\times \{0\}$ and
		\begin{equation}
			\label{comp1}
			(\lambda (\nabla^2 \varphi (x, 0) + A [\varphi (x, 0)]), \Theta(x))\in \Gamma,\ \ \forall x \in \bM,
		\end{equation}
		and that
		\begin{equation}
			\label{comp2}
			f (\lambda (\nabla^2 \varphi (x, 0) + A [\varphi (x, 0)]), - \varphi_t (x, 0))
			= \psi [\varphi (x, 0)],\ \ \forall x \in \partial M,
		\end{equation}
		for each $(x, t) \in SM_T$ and $p \in T^*_x \bM$ .
		Then there exists $C_2 > 0$ depending on
		$|u|_{C^1 (\bM_T)}$, $|u_{t}|_{C^0 (\bM_T)}$, $|\ul u|_{C^2 (\bM_T)}$ and
		$|\varphi|_{C^4 (\mathcal{P} M_T)}$ such that
		\begin{equation}
			\label{hess-a10b}
			\max_{\mathcal{P} M_T}|\nabla^2 u| \leq C_2.
		\end{equation}
	\end{thm}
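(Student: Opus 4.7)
The plan is to establish the theorem in two parts corresponding to the estimates \eqref{hess-a10} and \eqref{hess-a10b}. For \eqref{hess-a10} I reduce the interior bound on $|\nabla^2 u|$ to its boundary bound via a maximum principle on a carefully chosen auxiliary function; for \eqref{hess-a10b} I handle separately the three directional cases on $SM_T$ together with the initial slice $BM_T$, using \eqref{comp1}--\eqref{comp2} to match at the corner $\partial M \times \{0\}$.

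For the global estimate, following \cite{GuanJiao.2016} and \cite{JiangTrudinger.2020}, I would work with
\[ W(x,t,\xi) = \log U_{\xi\xi}(x,t) + \phi(|\nabla u|^2) + a\bigl(\underline{u}(x,t) - u(x,t)\bigr), \]
where $U := \nabla^2 u + A[u]$, $\xi$ is a unit tangent vector, $\phi(s) = B_0 e^{\alpha s}$ is the Jiang--Trudinger exponential barrier, and $a>0$ is large. If the maximum is attained on $\mathcal{P} M_T$ there is nothing to prove; otherwise at an interior critical point $(x_0,t_0)$ I diagonalize $U$ in a normal frame with $U_{11} = \lambda_{\max}$, pick $\xi_0 = e_1$, exploit $W_i=0$ and $W_t - F^{ij} W_{ij} \leq 0$, and differentiate the equation twice in $e_1$. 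The third-derivative terms $F^{ij,kl} U_{ij,1} U_{kl,1}$ are nonpositive by \eqref{f2}, while differentiating $A[u]$ twice in $(x,p)$ produces a contribution of the form $F^{ii} A^{ii}_{p_k p_l}[u] u_{k1} u_{l1}$. Using $W_i = 0$ to substitute for $u_{ki}$, and the orthogonality $g(e_i,e_1)=0$ for $i\neq 1$, the MTW hypothesis \eqref{A3w} makes the $i\neq 1$ contribution sign-correct; the $i=1$ contribution is absorbed by the positive curvature furnished by $\phi''$. Convexity \eqref{A2} disposes of the analogous $\psi_{p_k p_l}$ terms. The strict subsolution obtained by perturbing $\underline u$ (possible thanks to \eqref{f5} via the Jiang--Trudinger--Yang trick referenced in the introduction) combined with concavity \eqref{f2} yields $a\bigl(F^{ij}(\underline{U}_{ij} - U_{ij}) - (\underline{u}_t - u_t)\bigr) \geq a c_0 \bigl(1 + \sum f_i \bigr)$, which closes the estimate once $\alpha,B_0,a$ are chosen large in that order, forcing $\lambda_{\max}(x_0,t_0)\leq C$.

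For the boundary estimate the bound on $BM_T$ is immediate since $\nabla^2 u(\cdot,0) = \nabla^2\varphi(\cdot,0)$; here \eqref{comp1} together with \eqref{comp2} fix $u_t(x,0)$ through the equation on $\partial M \times \{0\}$ and guarantee continuous matching at the corner. On $SM_T$, at a fixed point I choose a frame with $\partial_1,\ldots,\partial_{n-1}$ tangent to $\partial M$ and $\partial_n$ inward-normal. The pure tangential derivatives $u_{\alpha\beta}$ ($\alpha,\beta<n$) follow directly from $u=\varphi$. For the mixed tangential--normal derivatives $u_{\alpha n}$, I construct a barrier of the form $v = \pm T_\alpha(u-\varphi) + K(u-\underline u) - \tfrac{1}{2} L d^2$ on a parabolic neighborhood of the point, where $T_\alpha$ is a tangential vector field and $d = \mathrm{dist}(\cdot,\partial M)$; applying the linearized parabolic operator and again invoking the strict-subsolution negativity gives $|u_{\alpha n}|\leq C$ on $SM_T$. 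The double-normal $u_{nn}$ is then controlled by the Caffarelli--Nirenberg--Spruck rotation argument as adapted to parabolic problems on manifolds in \cite{BaoDongJiao.2016}, using concavity of $F$, the a priori bound on $-u_t$, and the just-obtained bounds on the other $U_{ij}$ to rule out escape of $\lambda_n$ to $+\infty$.

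The main obstacle, I expect, is the pure double-normal bound on $SM_T$: one must run the Caffarelli--Nirenberg--Spruck scheme with both the augmented Hessian $A(x,t,\nabla u)$ and the gradient-dependent $\psi$, relying on the MTW condition \eqref{A3w} rather than a convexity assumption on $A$ in $p$; the exponential barrier is precisely what compensates for the missing concavity. A closely related technical difficulty in the global step is the bookkeeping by which the MTW term $A^{ii}_{p_k p_l} u_{k1} u_{l1}$ pairs with $f_i$ and is balanced by the exponential barrier $\phi$ and the perturbed strict subsolution; this is the mechanism by which \eqref{A3w} replaces the stronger convexity hypothesis used in the earlier work of \cite{BaoDongJiao.2016} and \cite{Jiao.2015}.
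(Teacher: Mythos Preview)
Your proposal has a genuine gap in the global estimate. You invoke ``the strict subsolution obtained by perturbing $\underline u$ (possible thanks to \eqref{f5} via the Jiang--Trudinger--Yang trick)'' to conclude $a\bigl(F^{ij}(\underline U_{ij}-U_{ij}) - (\underline u_t - u_t)\bigr) \geq ac_0(1+\sum f_i)$. But the perturbation $\underline u + a e^{bx_1}$ of \cite{JiangTrudingerYang.2014} requires a global linear coordinate, which does not exist on a general Riemannian manifold; the alternative perturbation $\underline u + a(e^{bd}-1)$ using the boundary distance works only in a collar of $\partial M$. The paper states this explicitly (see the discussion before Lemma~\ref{lemma2.1} and Remark~\ref{rk1}) and compensates by a dichotomy due to Guan--Shi--Sui: either $|\nu_\mu - \nu_\lambda|\geq\beta$, in which case Lemma~\ref{GSSlem} yields the extra $\varepsilon(1+\sum F^{ii}+F^\tau)$ needed to make the exponential barrier $\eta=e^{K(\underline u - u)}$ work (Lemma~\ref{lemma2.1}), or $|\nu_\mu - \nu_\lambda|<\beta$, in which case $F^{ii}\geq \tfrac{\beta}{\sqrt{n+1}}\sum F^{jj}$ for every $i$ and one argues differently. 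Without this split your linear term $a(\underline u - u)$ gives only $F(\underline U,-\underline u_t)-F(U,-u_t)\geq 0$, which is not coercive. Relatedly, the exponential barrier in the paper is $e^{K(\underline u - u)}$, not $e^{\alpha|\nabla u|^2}$; the role of the $|\nabla u|^2$ term is the quadratic one $\tfrac{\delta}{2}|\nabla u|^2$, whose second derivatives produce $\tfrac{\delta}{2}F^{ii}U_{ii}^2$ and thereby absorb the MTW error $-C\sum_{i\geq 2}F^{ii}U_{ii}^2$ coming from $F^{ii}A^{11}_{p_ip_i}U_{ii}^2$.

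Your boundary scheme also underestimates the mixed tangential--normal barrier. Because $A$ depends on $\nabla u$, differentiating the equation once yields $\mathcal L(\nabla_\alpha(u-\varphi)) \leq C\bigl(1+\sum f_i|\lambda_i|+\sum f_i + F^\tau\bigr)$, and the dangerous $\sum f_i|\lambda_i|$ term cannot be dominated by $K(u-\underline u)-\tfrac12 Ld^2$ alone. The paper's barrier $\varPsi = A_1(1-\eta)+A_2\rho^2 - A_3\sum_{l<n}|\nabla_l(u-\varphi)|^2$ is designed so that the last sum contributes $\sum_{l<n}F^{ij}U_{il}U_{jl}\geq \tfrac12\sum_{i\neq r}f_i\lambda_i^2$, which absorbs $\sum f_i|\lambda_i|$ (Lemma~\ref{barrier}); here the strict subsolution \emph{is} available by the near-boundary perturbation, but it supplies only the $\varepsilon(1+\sum f_i+F^\tau)$ coercivity, not the $f_i|\lambda_i|$ control. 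The double-normal step then follows Trudinger's argument via $\widetilde F(U_{\alpha\beta},-u_t)=\lim_{R\to\infty} f(\lambda'(U_{\alpha\beta}),R,-u_t)$, with the MTW condition used to pass from \eqref{pbs-c-225} to \eqref{n1}.
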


	Combining with the gradient estimates and the estimates of $|u_{t}|$, we can prove
	the following theorem immediately.
	
	\begin{thm}
		\label{thm-main}
		Let $u \in C^4 (\bM_T)$ be an
		admissible solution of (\ref{eqn}) in $M_T$ with
		$u \geq \ul u$ in $M_T$ and $u = \varphi$ on $\mathcal{P} M_T$.
		Suppose \eqref{f1}--\eqref{A3w} and \eqref{comp1}--\eqref{comp2} hold.
		Assume, in addition, for every $C>0$, there is a constant $R=R(C)$ such that
		\begin{equation}\label{ut-condition}
			f(R\textbf{1})>C,
		\end{equation}
		where $\textbf{1}=(1, \ldots, 1)\in \mathbb{R}^{n+1}$.
		Assume also there exist a bounded admissible viscosity supersolution $\overline{u}$ of \eqref{eqn}
		satisfying $\overline{u}\geq\varphi$ on $\mathcal{P} M_T$.
		Then we have
		\begin{equation}
			\label{gsui-3}
			|u|_{C^{2} (\bM_T)}\leq C,
		\end{equation}
		where $C > 0$ depends on $n$, $M$ and $|\ul u|_{C^2(\bM_T)}$
		under the additional assumptions \eqref{G1}--\eqref{Grd1} in Section 3.
	\end{thm}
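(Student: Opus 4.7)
The plan is to assemble the a priori $C^2$ bound by chaining four estimates: a zeroth order bound, a bound on $|u_t|$, a gradient bound, and the second order bound furnished by Theorem \ref{jjy-th1}. The hypotheses are arranged so each step feeds into the next, and the work is essentially in producing the hypotheses $|u|_{C^1(\bM_T)}$ and $|u_t|_{C^0(\bM_T)}$ that Theorem \ref{jjy-th1} presupposes.

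First I would establish the $C^0$ estimate. The lower bound $u \geq \ul u$ is given by hypothesis. For the upper bound I would invoke the parabolic comparison principle for admissible solutions against admissible viscosity supersolutions: since $\ol u \geq \varphi = u$ on $\mathcal{P} M_T$, one obtains $u \leq \ol u$ on $\bM_T$. Next I would bound $|u_t|$. A one-sided bound follows from differentiating \eqref{eqn} in $t$ and applying the parabolic maximum principle to the linearized operator, which is parabolic by \eqref{f1}; the initial data for the maximum principle comes from the compatibility conditions \eqref{comp1}--\eqref{comp2} together with the subsolution \eqref{sub}. The opposite direction is where \eqref{ut-condition} is essential: if $-u_t$ were unbounded, then evaluating $f$ along the equation combined with the admissibility $(\lambda,-u_t)\in\Gamma$ and $\Gamma \supseteq \Gamma^+$ would force $f$ to be bounded on points that can be compared (by concavity and monotonicity) to $R\mathbf{1}$ for arbitrarily large $R$, contradicting \eqref{ut-condition}; barriers built from $\ol u$ near $\mathcal{P}M_T$ let this argument be run globally.

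The gradient estimate then follows under the auxiliary structural conditions \eqref{G1}--\eqref{Grd1} of Section~3. The boundary gradient bound comes by sandwiching $u$ between $\ul u$ and $\ol u$ near $\mathcal{P} M_T$, both of which agree (in the requisite one-sided sense) with $\varphi$. The interior gradient bound follows by the standard maximum principle argument on an auxiliary quantity of the form $|\nabla u|^2 e^{\phi(u)}$, where concavity \eqref{f2}, positivity of $f_i$, convexity of $\psi$ in $p$ via \eqref{A2}, and the regular condition \eqref{A3w} are used to absorb or dominate the bad third derivative terms. With $|u|_{C^1(\bM_T)}$ and $|u_t|_{C^0(\bM_T)}$ now in hand, the hypotheses of Theorem \ref{jjy-th1} are met: the global estimate \eqref{hess-a10} reduces $\max_{\bM_T}|\nabla^2 u|$ to $\max_{\mathcal{P} M_T}|\nabla^2 u|$, and \eqref{hess-a10b} bounds the latter in terms of prior quantities and $|\varphi|_{C^4(\mathcal{P} M_T)}$. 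Combining the four steps yields \eqref{gsui-3}.

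The main obstacle is the two-sided bound on $|u_t|$. The one-sided direction coming from the subsolution and the linearized maximum principle is essentially standard parabolic machinery, but the opposite direction is genuinely delicate: a priori $-u_t$ could blow up along a sequence whose Hessian spectrum drifts so as to keep the left-hand side of \eqref{eqn} bounded, and ruling this out is the sole purpose of the growth condition \eqref{ut-condition} and the supersolution $\ol u$. Once this step is cleared, the remaining estimates are either classical or direct invocations of Theorem \ref{jjy-th1}.
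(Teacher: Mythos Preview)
Your broad scaffold---$C^0$, $|u_t|$, gradient, then invoke Theorem~\ref{jjy-th1}---is the right idea, but two of the links are threaded in the wrong order and one of your claimed tools is circular.

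\textbf{Ordering of the gradient and $|u_t|$ steps.} In the paper the gradient estimate (Section~3, Theorem~\ref{p-th0}) comes \emph{before} the $|u_t|$ estimate (Section~4, Theorem~\ref{ut_thm}), and this is not cosmetic. Because $A$ and $\psi$ depend on $\nabla u$, differentiating \eqref{eqn} in $t$ leaves a right-hand side $\psi_t - F^{ii}A^{ii}_t$ whose size is controlled only once $|\nabla u|$ is bounded; correspondingly, the paper's test function for $|u_t|$ contains a $\frac{\delta^{1+\alpha}}{2}|\nabla u|^2$ term. Conversely, the paper's gradient bound depends only on $|u|_{C^0}$, so it can be established first. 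Your proposed order would be circular.

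\textbf{The MTW condition is not available for the gradient step.} You say the interior gradient bound uses \eqref{A3w} to absorb bad terms. The paper's Remark after Theorem~\ref{p-th0} explains why this fails: the barrier $\eta=e^{K(\ul u-u)}$ built from \eqref{A3w} (Lemma~\ref{lemma2.1}) itself presupposes a gradient bound. The gradient estimate instead relies on the growth conditions \eqref{G1}--\eqref{G3} and the structural hypothesis \eqref{Grd1}, with the test function $|\nabla u|\,\phi^{-a}$, $\phi=-u+\sup u+1$.

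\textbf{Role of $\ol u$ and the boundary gradient.} The viscosity supersolution $\ol u$ is used only for the $C^0$ upper bound via comparison; it plays no part in the $|u_t|$ or boundary gradient estimates (and, being merely continuous, could not). The boundary gradient upper bound is obtained from \eqref{G3} by the method of Lemma~10.1 in \cite{Lieberman.1996}, not by sandwiching against $\ol u$.

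\textbf{Shape of the $|u_t|$ argument.} Your sketch (one side by linear maximum principle, the other by \eqref{ut-condition} plus $\ol u$-barriers) does not match what actually works. Both directions use an auxiliary function $(\mp u_t)e^{\phi}$ and a dichotomy on $|\nu_\mu-\nu_\lambda|$; condition \eqref{ut-condition} enters in case~(ii) to make $f(-u_t\mathbf{1})-\psi[u]$ large and force the bound on $-u_t$. The parabolic boundary values of $u_t$ come from $u_t=\varphi_t$ on $SM_T$ together with \eqref{comp1}--\eqref{comp2}, not from $\ol u$.
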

	
	The assumptions of the existence of bounded viscosity supersolution and the additional conditions \eqref{G1}--\eqref{Grd1} are only used to derive $C^0$ and $C^1$ estimates.
	\eqref{ut-condition} is used in the estimates of $|u_{t}|$ and can be dropped if $\ul u$ is strict subsolution.
	Both \eqref{ut-condition} and \eqref{Grd1} hold for many operators such as the famous Monge-Amp\` ere operator or more general k-Hessian operator $\sigma_{k}^{1/k}$.

	The outline of this paper is as follows. In Section 2, we present some preliminaries and give a proof of Lemma~\ref{lemma2.1}.
	The solution bound and the gradient bound are derived in Section 3 while an \emph{a priori}
	estimates for $u_{t}$ is obtained in Section 4.  Finally we
	establish the global and boundary $C^{2}$ estimates in Section 5 and Section 6 respectively.

	\section{Preliminaries}
	\label{gj-P}
	\setcounter{equation}{0}
	\medskip
	
	Throughout the paper $\nabla$ denotes the Levi-Civita connection
	of $(M^n, g)$.
	
	Let $u \in C^4 (\bM_T)$ be an admissible solution of
	Eq~\eqref{eqn}.
	For simplicity we shall denote $U := \nabla^2 u + A (x, t, \nabla u)$
	and $\ul U := \nabla^2 \ul u + A (x, t, \nabla \ul u)$.
	Moreover, we denote,
	\[F^{ij} = \frac{\partial F}{\partial h_{ij}} (U, -u_{t}), \;\;
	F^{\tau}=\frac{\partial F}{\partial \tau}(U, -u_{t}), \]
	\[ F^{ij, kl} = \frac{\partial^2 F}{\partial h_{ij} \partial h_{kl}} (U, -u_{t}), \;\;
	F^{ij, \tau} = \frac{\partial^2 F}{\partial h_{ij} \partial \tau} (U, -u_{t}), \;\;
	F^{\tau, \tau} = \frac{\partial^2 F}{\partial^{2} \tau } (U, -u_{t})\]
	and, under a local frame $e_1, \ldots, e_n$,
	\[ U_{ij} \equiv U (e_i, e_j) = \nabla_{ij} u + A^{ij} (x, t, \nabla u), \]
	$$
	\begin{aligned}
		\nabla_k U_{ij}
		\equiv \,& \nabla U (e_i, e_j, e_k)
		= \nabla_{kij} u + \nabla_k A^{ij} (x, t, \nabla u)  \\
		\equiv \,& \nabla_{kij} u  + A_{k}^{ij} (x, t, \nabla u)
		+ A^{ij}_{p_l} (x, t, \nabla u) \nabla_{kl} u,
	\end{aligned}
	$$
	$$
	\begin{aligned}
		(U_{ij})_t
		\equiv \,& (U (e_i, e_j))_t
		= (\nabla_{ij} u)_t + A^{ij}_t (x, t, \nabla u)
		+ A^{ij}_{p_l} (x, t, \nabla u) (\nabla_{l} u)_t \\
		\equiv \,& \nabla_{ij} u_t  + A_{t}^{ij} (x, t, \nabla u)
		+ A^{ij}_{p_l} (x, t, \nabla u) \nabla_{l} u_t,
	\end{aligned}
	$$
where $A^{ij} = A^{e_{i} e_{j}}$ and
$A_{k}^{ij}$ denotes the {\em partial} covariant derivative of $A$
when viewed as depending on $x \in M$ only, while the meanings of
$A^{ij}_t$ and $A^{ij}_{p_l}$, etc are obvious. Similarly we can calculate
$\nabla_{kl} U_{ij} = \nabla_k \nabla_l U_{ij} - \Gamma_{kl}^m \nabla_m U_{ij}$, etc.

It is convenient to express the regular condition of $-A$ in the equivalent form as in \cite{JiangTrudinger.2017},
\begin{equation}\label{A3w-1}
	-A^{ij}_{p_{k}p_{l}}\xi_{i}\xi_{j}\eta_{k}\eta_{l}\geq -2\overline{\lambda}|\xi||\eta|g(\xi\cdot\eta),
\end{equation}
for all $\xi, \eta\in \mathbb{R}^{n}$, where $\overline{\lambda}$ is a non-negative function in $C^{0}(\overline{M_{T}}\times \mathbb{R}^{n})$,
depending on $\nabla_{p}A$. Hence, we have, for any non-negative symmetric matrix $F^{ij}$ and $\epsilon \in (0,1]$,
\begin{equation}\label{A3w-1-1}
	-F^{ij}A^{ij}_{p_{k}p_{l}}\eta_{k}\eta_{l}\geq -\overline{\lambda}\big(\epsilon\sum F^{ii}|\eta|^{2}+\frac{1}{\epsilon}F^{ij}\eta_{i}\eta_{j}\big).
\end{equation}

Define the linear operator $\mathcal{L}$ locally by
\[\mathcal{L} v = F^{ij} \nabla_{ij} v + (F^{ij} A^{ij}_{p_k} - \psi_{p_k}) \nabla_k v - F^{\tau}v_t\]
for $v \in C^{2} (M_T)$.

A crucial lemma was proved by Jiang-Trudinger for elliptic type equations in Lemma 2.1(ii)
in \cite{JiangTrudinger.2020} for $M=\mathbb{R}^{n}$, we extend their results to the parabolic case.
Note that their perturbation of non-strict subsolution, which make a non-strict subsolution to be strict, only holds near the boundary in the Riemannian manifolds case. Therefore we shall apply a classification technique from \cite{GuanJiao.2016} to deal with global estimates.

Let $\mu(x,t)=\lambda(\nabla^{2}\ul u(x,t)+A[\ul u])$ and note that $\{\mu(x,t):(x,t)\in M_{T}\}$ is a compact subset of positive cone $\Gamma^{+}$ since \eqref{f1}. There exists uniform constant $\beta\in(0,\frac{1}{2\sqrt{n}})$ such that
\begin{equation}\label{2.0}
	\nu_{\mu}-2\beta \textbf{1}\in \Gamma^{+},\;\forall x \in \bM_{T},
\end{equation}
where $\nu_{\lambda}:=Df(\lambda)/|Df(\lambda)|$  is the unit normal vector to the level hypersurface $\partial\Gamma^{f(\lambda)}$ for $\lambda \in \Gamma$ and $\textbf{1}=(1, \ldots, 1)\in \mathbb{R}^{n+1}$.

For fixed $(x_{0},t_{0})$, we consider two cases: (i) $|\nu_{\mu}-\nu_{\lambda}|\geq \beta$ and (ii) $|\nu_{\mu}-\nu_{\lambda}|< \beta$. In case (i), we shall modify Jiang-Trduinger's Lemma~2.1 \cite{JiangTrudinger.2020}.
First, we need the following lemma, its proof can be found in Lemma~2.2 \cite{GuanShiSui2015}.
\begin{lem}
	\label{GSSlem}
	Let $K$ be a compact subset of $\Gamma$ and $\beta>0$. There is a constant $\epsilon>0$ such that, for any
	$\mu\in K$ and $\lambda\in\Gamma$ with $|\nu_{\mu}-\nu_{\lambda}|\geq \beta$,
	\begin{equation}\label{GSS}
		\sum f_{i}(\mu_{i}-\lambda_{i})\geq f(\mu)-f(\lambda)+\epsilon\Big(1+\sum f_{i}(\lambda)\Big).
	\end{equation}
\end{lem}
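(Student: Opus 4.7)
Set $E(\mu,\lambda) := \sum f_i(\lambda)(\mu_i - \lambda_i) - f(\mu) + f(\lambda)$. By concavity \eqref{f2}, $E \geq 0$ automatically, so the task is to upgrade this to the quantitative bound $E(\mu,\lambda) \geq \epsilon (1 + \sum f_i(\lambda))$ under the angle separation $|\nu_\mu - \nu_\lambda| \geq \beta$, with $\epsilon = \epsilon(K,\beta) > 0$. Since $K$ is a compact subset of $\Gamma$ and $\Gamma^+ \subset \Gamma$, one can pick an auxiliary point $\lambda_0 \in \Gamma$ whose components are strictly dominated by those of every $\mu \in K$, setting
\[
c_0 := \min_i \inf_{\mu \in K}(\mu_i - \lambda_{0,i}) > 0, \qquad C_0 := \sup_{K} f - f(\lambda_0).
\]

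\emph{Step 1 (large $\sum f_i(\lambda)$).} Applying concavity at $\lambda$ tested against $\lambda_0$ yields $\sum f_i(\lambda) \lambda_i \leq f(\lambda) - f(\lambda_0) + \sum f_i(\lambda) \lambda_{0,i}$. Substituting into the definition of $E$,
\[
E(\mu,\lambda) \geq \sum f_i(\lambda)(\mu_i - \lambda_{0,i}) + f(\lambda_0) - f(\mu) \geq c_0 \sum f_i(\lambda) - C_0.
\]
Hence $E \geq \frac{c_0}{2}\sum f_i(\lambda)$ as soon as $\sum f_i(\lambda) \geq M_0 := 2C_0/c_0$, and the claimed inequality holds in this regime with any $\epsilon \leq c_0/4$, \emph{without} invoking the angle condition.

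\emph{Step 2 (bounded $\sum f_i(\lambda)$, contradiction).} It remains to handle $\sum f_i(\lambda) \leq M_0$. Suppose no uniform $\epsilon > 0$ exists. Extract sequences $\mu^{(k)} \to \mu^* \in K$ and $\lambda^{(k)} \in \Gamma$ satisfying $\sum f_i(\lambda^{(k)}) \leq M_0$, $|\nu_{\mu^{(k)}} - \nu_{\lambda^{(k)}}| \geq \beta$, and $E(\mu^{(k)}, \lambda^{(k)}) \to 0$. Show that $\lambda^{(k)}$ stays in a compact subset of $\Gamma$ (see obstacle) and pass to a subsequence $\lambda^{(k)} \to \lambda^* \in \Gamma$. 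Continuity gives $E(\mu^*, \lambda^*) = 0$ and $|\nu_{\mu^*} - \nu_{\lambda^*}| \geq \beta$. The vanishing of $E$ means the supporting affine function $\ell(\nu) := f(\lambda^*) + \sum f_i(\lambda^*)(\nu_i - \lambda^*_i)$ agrees with $f$ at both endpoints, so by concavity $f \equiv \ell$ on the whole segment $[\lambda^*, \mu^*]$. A standard sandwiching argument (using $\ell \geq f$ in a neighborhood and $\ell = f$ on an open subset of the segment) then forces $Df(\nu) = Df(\lambda^*)$ at every interior $\nu$, and continuity extends this to $Df(\mu^*) = Df(\lambda^*)$. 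Hence $\nu_{\mu^*} = \nu_{\lambda^*}$, contradicting $|\nu_{\mu^*} - \nu_{\lambda^*}| \geq \beta$.

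\emph{Main obstacle.} The delicate point is confining $\lambda^{(k)}$ to a compact subset of $\Gamma$ in Step 2. Since $\Gamma$ is only required to contain $\Gamma^+$ and to be a proper open symmetric cone, $\lambda^{(k)}$ could \emph{a priori} escape either to infinity or to $\partial \Gamma$. One controls the first possibility by rescaling to $\tilde\lambda^{(k)} = \lambda^{(k)}/|\lambda^{(k)}|$ and re-running the comparison with $\lambda_0$ from Step 1, which forces $E$ to grow linearly along rays and contradicts $E(\mu^{(k)},\lambda^{(k)}) \to 0$. The second possibility is ruled out by combining the bound $\sum f_i(\lambda^{(k)}) \leq M_0$ with the structural hypotheses \eqref{f1} and \eqref{f5}: approaching $\partial \Gamma$ typically sends some $f_i$ to infinity (as for $\sigma_k^{1/k}$ and its quotients), contradicting the uniform bound on $\sum f_i(\lambda^{(k)})$. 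This step is where one genuinely needs the full structure package on $(f, \Gamma)$ rather than bare concavity.
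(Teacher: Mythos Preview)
The paper does not prove this lemma; it simply defers to Lemma~2.2 of \cite{GuanShiSui2015}. Your two--regime split (large vs.\ bounded $\sum f_i(\lambda)$, then contradiction in the second) is the right architecture, but Step~2 contains a real gap. You try to confine $\lambda^{(k)}$ to a compact subset of $\Gamma$ by asserting that approaching $\partial\Gamma$ ``typically sends some $f_i$ to infinity''. That is \emph{not} implied by \eqref{f1}, \eqref{f2}, \eqref{f5}; for instance $f(\lambda)=\sum_i\lambda_i-|\lambda|$ on $\Gamma^+$ satisfies all of them yet has $\sum f_i=(n+1)-\sum_i\lambda_i/|\lambda|$ uniformly bounded on all of $\Gamma^+$. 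Your proposed compactness mechanism therefore fails in general, and the word ``typically'' already signals you are leaning on examples rather than hypotheses.

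The fix is to realise that compactness of $\lambda^{(k)}$ is unnecessary: you only need the \emph{gradients} $p^{(k)}:=Df(\lambda^{(k)})$ to converge, and these are bounded by Step~1. Pass to $p^{(k)}\to p^*$. The affine supports $\ell_k(\nu)=p^{(k)}\cdot\nu+\big(f(\lambda^{(k)})-p^{(k)}\cdot\lambda^{(k)}\big)$ satisfy $\ell_k\ge f$ on $\Gamma$, and since $E(\mu^{(k)},\lambda^{(k)})=\ell_k(\mu^{(k)})-f(\mu^{(k)})\to 0$ the intercepts converge as well. The limit $\ell^*$ is an affine majorant of $f$ touching $f$ at $\mu^*\in K\subset\Gamma$; smoothness of $f$ there forces $p^*=Df(\mu^*)$, whence $\nu_{\lambda^{(k)}}=p^{(k)}/|p^{(k)}|\to\nu_{\mu^*}$, contradicting $|\nu_{\mu^{(k)}}-\nu_{\lambda^{(k)}}|\ge\beta$. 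This closes Step~2 using only \eqref{f1}--\eqref{f2}. A smaller point: a single $\lambda_0\in\Gamma$ dominated componentwise by every $\mu\in K$ need not exist when $\Gamma\supsetneq\Gamma^+$ (e.g.\ $K=\{(2,-1),(-1,2)\}\subset\Gamma_1$ in $\bfR^2$); replace it by $\lambda_0(\mu)=\mu-t_0\mathbf{1}$ with $t_0>0$ chosen so that $K-t_0\mathbf{1}\subset\Gamma$, and Step~1 goes through verbatim with $c_0=t_0$.
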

It follows from Lemma~6.2 in \cite{CaffarelliNirenbergSpruck.1985} and Lemma~\eqref{GSS} that
\begin{equation}\label{case 1_0}
	F^{ij}({\ul U}_{ij}-U_{ij})\geq F(\ul U, -{\ul u}_{t})-F(U,-u_{t})+\epsilon(1+\sum F^{ii}+F^{\tau}).
\end{equation}
We now prove the crucial lemma for case (i).
\begin{lem}
	\label{lemma2.1}
	Let $u\in C^{2}(\bM_{T})$ be an admissible solution of Eq \eqref{eqn}
	Suppose $|\nu_{\mu}-\nu_{\lambda}|\geq \beta$.
	Assume $F$ satisfies \eqref{f1}--\eqref{f2} and \eqref{sub}--\eqref{A3w} hold. Then there exist positive constants $K$ and $\epsilon$ , depending on $M_{T}$, $A$,
	$|u|_{C^{1}(\bM_{T})}$ and $|\ul u|_{C^{1}(\bM_{T})}$
	such that
	\begin{equation}\label{lem2.1}
		\mathcal{L}\eta>\epsilon(1+\sum F^{ii}+F^{\tau}),
	\end{equation}
	where $\eta=e^{K(\ul u-u)}$.
\end{lem}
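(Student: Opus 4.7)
The plan is to substitute $\eta = e^{K(\ul u - u)}$ into $\mathcal{L}$, collect the output so that the case-(i) concavity bound \eqref{case 1_0}, the convexity of $\psi$ and the regular (MTW) inequality \eqref{A3w-1-1} can be applied in sequence, and then fix $K$ large enough so that all error terms are absorbed into the dominant positive pieces.

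First I would compute $\nabla_i \eta = K\eta\, \nabla_i(\ul u - u)$, $\eta_t = K\eta\,(\ul u_t - u_t)$ and $\nabla_{ij} \eta = K\eta\, \nabla_{ij}(\ul u - u) + K^2\eta\, \nabla_i(\ul u - u)\nabla_j(\ul u - u)$, substitute into $\mathcal{L}$, rewrite $\nabla_{ij}(\ul u - u) = (\ul U_{ij} - U_{ij}) - (A^{ij}[\ul u] - A^{ij}[u])$, and Taylor-expand $A^{ij}[\ul u] - A^{ij}[u]$ to second order in $\nabla u$. The first-order piece $F^{ij} A^{ij}_{p_k} \nabla_k(\ul u - u)$ cancels exactly against the drift term coming from $\mathcal{L}$, so after dividing by $K\eta$ and writing $\zeta := \nabla(\ul u - u)$ one is left with
\[ \tfrac{1}{K\eta} \mathcal{L}\eta = \big[F^{ij}(\ul U_{ij} - U_{ij}) - F^\tau(\ul u_t - u_t)\big] - \psi_{p_k}\zeta_k - \tfrac12 F^{ij} A^{ij}_{p_k p_l}(\cdot, \tilde p)\zeta_k\zeta_l + K F^{ij} \zeta_i \zeta_j, \]
where the second-order $A$-derivative is evaluated at some intermediate $\tilde p$ from Taylor's theorem.

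Next I would apply the angle-condition inequality \eqref{case 1_0} in its $(n+1)$-dimensional parabolic form so that the bracketed expression is bounded below by $F(\ul U, -\ul u_t) - F(U, -u_t) + \epsilon_0(1 + \sum F^{ii} + F^\tau)$. The subsolution inequality \eqref{sub} and the equation give $F(\ul U, -\ul u_t) - F(U, -u_t) \geq \psi[\ul u] - \psi[u]$, and the convexity of $\psi$ in $p$ from \eqref{A2} yields $\psi[\ul u] - \psi[u] - \psi_{p_k}\zeta_k \geq 0$. The first two groups in the previous display therefore already deliver the target lower bound $\epsilon_0(1 + \sum F^{ii} + F^\tau)$.

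The main obstacle is the a priori unsigned residual $-\tfrac12 F^{ij} A^{ij}_{p_k p_l}\zeta_k\zeta_l$, which is precisely what the regular condition is designed for. Applying \eqref{A3w-1-1} with a free parameter $\epsilon_1 \in (0,1]$ bounds this residual below by $-\tfrac{\ol\lambda}{2}\big(\epsilon_1 |\zeta|^2 \sum F^{ii} + \epsilon_1^{-1} F^{ij}\zeta_i\zeta_j\big)$. Since $|\zeta|^2$ is controlled by $|u|_{C^1(\bM_T)}$ and $|\ul u|_{C^1(\bM_T)}$, I would first choose $\epsilon_1$ small enough that $\tfrac{\ol\lambda \epsilon_1}{2}|\zeta|^2 \leq \tfrac{\epsilon_0}{2}$, which absorbs the $\sum F^{ii}$ part into half of the good term. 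I would then take $K \geq \tfrac{\ol\lambda}{2\epsilon_1}$ so that the remaining $F^{ij}\zeta_i\zeta_j$ error is dominated by the quadratic $K F^{ij}\zeta_i\zeta_j$ contribution generated by the exponential barrier; both choices depend only on the data declared in the statement. With these choices in place, one reads off $\mathcal{L}\eta \geq \tfrac12 K\eta\, \epsilon_0(1 + \sum F^{ii} + F^\tau)$, which gives \eqref{lem2.1} with $\epsilon := \tfrac12 K\epsilon_0 \min_{\bM_T}\eta > 0$.
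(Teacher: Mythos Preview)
Your proposal is correct and follows essentially the same route as the paper's proof: compute $\mathcal{L}\eta$, use Taylor's theorem on $A$ so that the first-order drift term cancels, apply the case-(i) bound \eqref{case 1_0} together with the subsolution inequality and the convexity of $\psi$ to produce the main positive term $\epsilon_0(1+\sum F^{ii}+F^\tau)$, and then use the MTW inequality \eqref{A3w-1-1} with a free parameter $\epsilon_1$ followed by a large choice of $K$ to absorb the second-order $A$-residual into the quadratic term $KF^{ij}\zeta_i\zeta_j$. The only cosmetic difference is that the paper first bounds $\mathcal{L}(\ul u-u)$ and then passes to $\mathcal{L}e^{K(\ul u-u)}$, while you expand $\mathcal{L}\eta$ directly; the algebra and the parameter choices are the same.
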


\begin{proof}
	By \eqref{case 1_0}, we have
	\begin{equation}\label{Lem2.1-1}
		\begin{aligned}
			\mathcal{L}(\ul u-u)=\,&F^{ij}\{[\ul U_{ij}-U_{ij}]-F^{\tau}[\ul u_{t}-u_{t}]+A^{ij}_{p_{k}}D_{k}(\ul u-u)\\
			\,&     -A^{ij}(x,t,D\ul u)+A^{ij}(x,t,Du)\}-\psi_{p_{k}}\nabla_{k}(\ul u-u)\\
			\geq\,& F(\ul U,-\ul u_{t})-F(U,-u_{t})-\psi_{p_{k}}\nabla_{k}(\ul u-u)\\
			\,&     -\frac{1}{2}F^{ij}A^{ij}_{p_{k},p_{l}}(x,t,\hat{p})D_{k}(\ul u-u)D_{l}(\ul u-u)\\
			\,&     +\epsilon(1+\sum F^{ii}+F^{\tau})\\
			\geq\,& -\frac{1}{2}F^{ij}A^{ij}_{p_{k},p_{l}}(x,t,\hat{p})D_{k}(\ul u-u)D_{l}(\ul u-u)\\
			\,&     +\epsilon(1+\sum F^{ii}+F^{\tau})
		\end{aligned}
	\end{equation}
	by Taylor's formula and the convexity of $\psi$, where $\hat{p}=\theta\nabla u+(1-\theta)\nabla\ul u$ for some $\theta\in (0,1)$. Thus
	\begin{equation}\label{Lem2.1-2}
		\begin{aligned}
			\mathcal{L}e^{K(\ul u-u)}=\,&Ke^{K(\ul u-u)}[\mathcal{L}(\ul u-u)+KF^{ij}D_{i}(\ul u-u)D_{j}(\ul u-u)]\\
			\geq\,& Ke^{K(\ul u-u)}\Big\{-\frac{1}{2}F^{ij}A^{ij}_{p_{k},p_{l}}(x,t,\hat{p})D_{k}(\ul u-u)D_{l}(\ul u-u)\\
			\,&+KF^{ij}D_{i}(\ul u-u)D_{j}(\ul u-u)+\epsilon(1+\sum F^{ii}+F^{\tau})\Big\}.
		\end{aligned}
	\end{equation}
	Since $A$ is regular, by \eqref{A3w-1-1}, we obtain
	$$
	\begin{aligned}
		\epsilon\sum F^{ii}\,&-\frac{1}{2}F^{ij}A^{ij}_{p_{k},p_{l}}(x,t,\hat{p})D_{k}(\ul u-u)D_{l}(\ul u-u)
		+KF^{ij}D_{i}(\ul u-u)D_{j}(\ul u-u)\\
		\,&\geq \Big(\epsilon-\frac{\overline{\lambda}\epsilon_1}{2}|D(\ul u-u)|^{2}\Big)\sum F^{ii}
		+\Big(K-\frac{\overline{\lambda}}{2\epsilon_1}\Big)F^{ij}D_{i}(\ul u-u)D_{j}(\ul u-u)\\
		\,&\geq \frac{\epsilon}{2}\sum F^{ii}
	\end{aligned}
	$$
by successively fixing $\epsilon_{1}$ and $K$.

Therefore, by \eqref{Lem2.1-2}, we have
\begin{equation}\label{Lem2.1-4}
	\mathcal{L}e^{K(\ul u-u)}\geq Ke^{K(\ul u-u)}\Big(\frac{\epsilon}{2}(1+\sum F^{ii}+F^{\tau})\Big)\geq
	\epsilon_{0}(1+\sum F^{ii}+F^{\tau})
\end{equation}
for some positive constant $\epsilon_{0}$.
\end{proof}

Next, in case (ii), we have $\nu_{\lambda}-\beta \textbf{1}\in \Gamma^{+}$. Thus we derive
\begin{equation}\label{case 2}
F^{ii}\geq \frac{\beta}{\sqrt{n+1}}\sum F^{ii}\;\;\forall 1\leq i \leq n+1.
\end{equation}

\begin{remark}\label{rk1}
If $\ul u$ is a strict subsolution or $M=\mathbb{R}^{n}$, then we can derive \eqref{lem2.1} without the assumption $|\nu_{\mu}-\nu_{\lambda}|\geq \beta$.
Actually, when $M=\mathbb{R}^{n}$, let $d(x)=dist(x,\partial M)$, by consider $\ul u+ae^{bx_{1}}$ and $\ul u+a(e^{bd}-1)$ for interior and near boundary respectively in $\mathbb{R}^{n}$, a strict subsolution can be derived from a non-strict one, see remark 2.2 in \cite{JiangTrudingerYang.2014}.
Then \eqref{lem2.1} will be obtained by Jiang-Trudinger's proof with a little modification.
\end{remark}



\section{Gradient estimates}
In this section, we derive the gradient estimates.
We introduce the following growth conditions: When $|p|$ is sufficiently large,
\begin{equation}
\label{G1}
p \cdot \nabla_x \psi (x, t, p),
\; p \cdot \nabla_x A^{\xi \xi} (x, t, p)/|\xi|^2
\leq \bar{\psi}_{1}(x, t)  (1 + |p|^{\gamma}),
\end{equation}
\begin{equation}
\label{G2}
|p \cdot D_p \psi (x, t, p)|,
\; |p \cdot D_p A^{\xi \xi} (x, t, p)|/|\xi|^2
\leq \bar{\psi}_{2}(x, t)  (1 + |p|^{\gamma})
\end{equation}
and
\begin{equation}
\label{G3}
|A^{\xi \eta} (x, t, p)|
\leq \bar{\psi}_{3} (x, t) |\xi||\eta| (1 + |p|^{\gamma_{1}})
\;\; \forall \, \xi, \eta \in T^{*}_x \bM
\end{equation}
hold for some functions $\bar{\psi}_1, \bar{\psi}_2
,\bar{\psi}_3
\geq 0$,  and constants
$\gamma\in(0,4)$ and $\gamma_{1}\in(0,2)$.

By the existence of viscosity supersolution $\overline{u}$ and classical subsolution $\ul u$, we have
$$
\max_{\bM_T}|u|\leq C.
$$
Since $u$ is admissible, we have
\[
0 <   \triangle u + \mathrm{tr} A (x, t, \nabla u) -u_{t}.
\]
The boundary gradient estimates are derived by subsolution $\ul u$ for the lower bound and by \eqref{G3} with the method of Lemma 10.1 in \cite{Lieberman.1996}
for the upper bound.

\begin{thm}
\label{p-th0}
Let $u \in C^3 (\bM_T)$ be an
admissible solution of (\ref{eqn}).
Suppose \eqref{f1}--\eqref{f2} and \eqref{G1}--\eqref{G3} hold.
Assume, in addition, that
\begin{equation}
	\label{Grd1}
	f_{j}\geq\nu_{0}(1+\sum^{n+1}_{i=1}f_{i})\;\; \mbox{for any}\;  \lambda \in \Gamma\; \mbox{with} \;\lambda_{j}<0,
\end{equation}
where $\nu_{0}$ is a uniform positive constant.
Then
\begin{equation}
	\label{3I-R60}
	\max_{\bM_T} |\nabla u|
	\leq C_3 \big(1 + \max_{\mathcal{P} M_T} |\nabla u|\big),
\end{equation}
where $C_3$ is a positive constant depending on $|u|_{C^0 (\bM_T)}$ and other known data.
\end{thm}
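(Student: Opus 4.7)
The plan is to use a Bernstein-type maximum principle on the auxiliary function
\[
W := \tfrac12 |\nabla u|^{2}\,e^{\alpha(M_{0}-u)}, \qquad M_{0}:=\sup_{\bM_{T}} u,
\]
with $\alpha>0$ to be fixed later. If $W$ attains its maximum on $\mathcal{P}M_{T}$, then \eqref{3I-R60} is immediate. Otherwise suppose $W$ is maximised at some interior point $(x_{0},t_{0})\in M_{T}$. At this point $\nabla W=0$, $\nabla^{2}W\le 0$, $W_{t}\ge 0$, and since $F^{ij}\ge 0$ and $F^{\tau}>0$, this yields $\mathcal{L}W\le 0$. Choosing a local normal frame at $x_{0}$ that diagonalises $U=\nabla^{2}u+A[u]$, with eigenvalues $\lambda_{i}=U_{ii}$, the task is to bound $|\nabla u(x_{0},t_{0})|$ a priori.

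The next step would be to expand $\mathcal{L}W$ using the product rule and eliminate the third-order terms $F^{ij}\nabla_{kij}u$ by differentiating \eqref{eqn} in $e_{k}$, producing
\[
F^{ij}\nabla_{kij}u = \psi_{x_{k}} + \psi_{p_{l}}\nabla_{kl}u + F^{\tau}\nabla_{k}u_{t} - F^{ij}A^{ij}_{k} - F^{ij}A^{ij}_{p_{l}}\nabla_{kl}u.
\]
Commuting covariant derivatives via the Ricci identity contributes a harmless curvature term of size $O(|\nabla u|^{2})\sum F^{ii}$. The dominant positive contributions are
\[
F^{ij}\nabla_{ki}u\,\nabla_{kj}u = \sum f_{i}\lambda_{i}^{2} + \text{(corrections bounded by \eqref{G3})},
\]
together with the chain-rule term $\alpha|\nabla u|^{2}\sum f_{i}\lambda_{i}$ coming from the factor $e^{\alpha(M_{0}-u)}$. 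The bad terms $\nabla_{k}u\,\psi_{x_{k}}$ and $F^{ij}\nabla_{k}u\,A^{ij}_{k}$ are controlled by \eqref{G1}, while the mixed term $(\psi_{p_{l}}-F^{ij}A^{ij}_{p_{l}})\nabla_{kl}u\,\nabla_{k}u$ is converted into a purely first-order expression using the critical-point identity $\nabla_{k}u\,\nabla_{ki}u = (\alpha/2)|\nabla u|^{2}\nabla_{i}u$ and then bounded using \eqref{G2}.

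The main obstacle is turning the resulting differential inequality into a contradiction for large $|\nabla u(x_{0},t_{0})|$. The critical-point identity shows that $\nabla u$ is an eigenvector of $\nabla^{2}u$ with eigenvalue $(\alpha/2)|\nabla u|^{2}$, so by \eqref{G3} and $\gamma_{1}<2$, some $\lambda_{j}\ge(\alpha/4)|\nabla u|^{2}$ once $|\nabla u(x_{0},t_{0})|$ is large enough. If all remaining $\lambda_{i}$ are nonnegative, concavity \eqref{f2} together with \eqref{f5} forces $\sum_{i=1}^{n+1} f_{i}\lambda_{i}\ge\delta_{\psi,f}>0$, so for $\alpha$ chosen sufficiently large the good pair $\sum f_{i}\lambda_{i}^{2}\ge f_{j}\alpha^{2}|\nabla u|^{4}/16$ and $\alpha|\nabla u|^{2}\delta_{\psi,f}$ dominate the remaining bad terms whose growth exponents in $|\nabla u|$ are precisely those permitted by $\gamma<4$ and $\gamma_{1}<2$. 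If instead some $\lambda_{i}<0$, hypothesis \eqref{Grd1} gives $f_{i}\ge\nu_{0}(1+\sum f_{k})$, which rescales the quadratic good term by $\sum F^{ii}$ and closes the argument in the same way. The sharp thresholds $\gamma<4$ and $\gamma_{1}<2$ are used exactly in this final absorption, yielding the bound on $|\nabla u(x_{0},t_{0})|$ and hence \eqref{3I-R60}.
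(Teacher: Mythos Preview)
Your auxiliary function has the wrong monotonicity in $u$, and this reverses the sign of the two terms you rely on. With $W=\tfrac12|\nabla u|^2 e^{\alpha(M_0-u)}$ the critical-point identity indeed gives $\nabla_k u\,\nabla_{ik}u=\tfrac{\alpha}{2}|\nabla u|^2\nabla_i u$, so $\nabla u$ is an eigenvector of $\nabla^2 u$ with \emph{positive} eigenvalue $\tfrac{\alpha}{2}|\nabla u|^2$; hypothesis \eqref{Grd1} therefore says nothing about the corresponding $f_j$. More seriously, if you expand $\mathcal{L}W\le 0$ at the maximum (after dividing by $e^{\alpha(M_0-u)}$) you get
\[
F^{ii}\nabla_{ii}G - F^\tau G_t - \alpha^2 G\,F^{ii}(\nabla_i u)^2 - \alpha G\Big(\sum f_i\lambda_i - F^{ii}A^{ii}\Big)\le 0,\qquad G=\tfrac12|\nabla u|^2,
\]
so the ``chain-rule'' contribution is $-\tfrac{\alpha}{2}|\nabla u|^2\sum f_i\lambda_i$, not $+\tfrac{\alpha}{2}|\nabla u|^2\sum f_i\lambda_i$; it helps the inequality hold rather than producing a contradiction. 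Your remaining good term $\sum f_i\lambda_i^2$ cannot rescue this: when $\lambda_j\sim\tfrac{\alpha}{2}|\nabla u|^2$ is large and positive there is no lower bound on $f_j$ (for $f=\sigma_k^{1/k}$ one has $f_j\lambda_j$ bounded, so $f_j\lambda_j^2\sim|\nabla u|^2$, not $|\nabla u|^4$), and in fact $\sum f_i\lambda_i(\lambda_i-\tfrac{\alpha}{2}|\nabla u|^2)$ is typically negative for the indices with $0<\lambda_i<\tfrac{\alpha}{2}|\nabla u|^2$. The case ``some $\lambda_i<0$'' does give $f_i\ge\nu_0(1+\sum f_k)$ for that $i$, but this says nothing about $f_j$, and $\lambda_i$ itself may be small.

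The paper's proof reverses this: it uses the weight $\phi^{-a}$ with $\phi=\sup u+1-u$ and $a\in(0,1)$ small, i.e.\ a weight \emph{increasing} in $u$. The critical-point relation then forces the eigenvalue of $\nabla^2 u$ in the $\nabla u$ direction to be $-\tfrac{a}{\phi}|\nabla u|^2<0$, so by \eqref{G3} with $\gamma_1<2$ one has $U_{11}<0$ and \eqref{Grd1} applies directly, giving $f_1\ge\nu_0(1+\sum f_i+F^\tau)$. The decisive good term is then $\tfrac{a-2a^2}{\phi^2}F^{11}(\nabla_1 u)^2\ge c_1|\nabla u|^2(1+\sum f_i+F^\tau)$ for $a$ small, which dominates the bad terms of order $(1+|\nabla u|^{\gamma-2})(1+\sum F^{ii})$ precisely because $\gamma<4$. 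If you flip the sign in your exponential (use $e^{\alpha(u-\inf u)}$) and take $\alpha$ small rather than large, your outline becomes essentially this argument.
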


\begin{proof}
Let $\phi\in C^{2}(\bM_{T})$ is a positive function to be determined. Suppose $|\nabla u|\phi^{-a}$ achieves a
positive maximum at an interior point $(x_{0},t_{0})\in \bM_{T}-\mathcal{P} M_T$ where $a<1$ is a constant. Choose a smooth orthonormal local frame $e_{1},\ldots,e_{n}$
about $(x_{0},t_{0})$ such that $\nabla_{e_{i}}e_{j}=0$ at $(x_{0},t_{0})$ if $i\neq j$ and $\{U_{ij}\}$ is diagonal.
Define $v=\log |\nabla u|-a\log \phi$, then
the function $v$ also attains its maximum at $(x_{0},t_{0})$ where, for $i=1, \ldots, n$,
\begin{equation}\label{Gradient-1'}
	\nabla_{i}v=\frac{\nabla_{l}u\nabla_{il}u}{|\nabla u|^{2}}-a\frac{\nabla_{i}\phi}{\phi}=0
\end{equation}
and
\begin{equation}\label{Gradient-1}
	F^{\tau}v_{t}\geq 0 \geq F^{ii}\nabla_{ii}v.
\end{equation}
Thus, by \eqref{Gradient-1'} and \eqref{Gradient-1}, we have
\begin{equation}\label{Gradient-x}
	\begin{aligned}
		0\geq\,&  F^{ii}\nabla_{ii}v-F^{\tau}v_{t}\\
		=   \,&  F^{ii}\nabla_{ii}(\log|\nabla u|)-F^{\tau}(\log |\nabla u|)_{t}
		-aF^{ii}\nabla_{ii}\log \phi+aF^{\tau}(\log \phi)_{t}\\
		=   \,&  \frac{1}{|\nabla u|^{2}}F^{ii}\nabla_{il}u\nabla_{il}u
		+\frac{\nabla_{l}u}{|\nabla u|^{2}}\Big(F^{ii}\nabla_{iil}u-F^{\tau}\nabla_{l}u_{t}\Big)\\
		\,& +\frac{a-2a^{2}}{\phi^{2}}F^{ii}(\nabla_{i}\phi)^{2}-\frac{a}{\phi}F^{ii}\nabla_{ii}\phi.
	\end{aligned}
\end{equation}
Differentiating both sides of Eq \eqref{eqn} with respect to $x$, we obtain, at $(x_{0},t_{0})$,
\begin{equation}
	\label{deqn-1}
	F^{ii} \nabla_{k} U_{ii}
	- F^{\tau}\nabla_k u_t = \psi_{k} +  \psi_{p_j} \nabla_{kj} u
\end{equation}
for all $k = 1, \ldots, n$.

Let $\phi=-u+\sup_{\bM_{T}}u+1$.
Note that, at $(x_{0}, t_{0})$, $\nabla_{ij}u=\nabla_{ij}u$ and
\begin{equation}\label{hess-A70}
	\nabla_{ijk}u-\nabla_{jik}u=R^{l}_{kij}\nabla_{l}u.
\end{equation}
By \eqref{G1}, \eqref{G2}, \eqref{Gradient-1'}, \eqref{deqn-1} and \eqref{hess-A70}, we have
\begin{equation}\label{Gradient-3}
	\begin{aligned}
		\frac{\nabla_{l}u}{|\nabla u|^{2}}\Big(F^{ii}\nabla_{iil}u-F^{\tau}\nabla_{l}u_{t}\Big)
		=   \,& \frac{\nabla_{l}u}{|\nabla u|^{2}}F^{ii}(\nabla_{lii}u-R^{k}_{iil}\nabla_{k}u-F^{\tau}\nabla_{l}u_{t})\\
		\geq\,& \frac{\nabla_{l}u}{|\nabla u|^{2}}F^{ii}(\nabla_{l}U_{ii}-\nabla_{l}(A^{ii})-F^{\tau}\nabla_{l}u_{t})-C\\
		\geq\,& -C(1+|\nabla u|^{\gamma-2})(1+\sum F^{ii}).
	\end{aligned}
\end{equation}
Therefore, by substituting \eqref{Gradient-3} into \eqref{Gradient-x}, we have
\begin{equation}\label{Gradient-x'}
	\begin{aligned}
		0\geq\,&  \frac{1}{|\nabla u|^{2}}F^{ii}\nabla_{il}u\nabla_{il}u
		+\frac{a-2a^{2}}{\phi^{2}}F^{ii}(\nabla_{i}u)^{2}+\frac{a}{\phi}F^{ii}\nabla_{ii}u\\
		\,& -C(1+|\nabla u|^{\gamma-2})(1+\sum F^{ii}).
	\end{aligned}
\end{equation}
Notice that
$$
\frac{1}{|\nabla u|^{2}}F^{ii}\nabla_{ii}u\nabla_{ii}u+\frac{a}{\phi}F^{ii}\nabla_{ii}u\geq -\frac{a^{2}|\nabla u|^{2}}{4\phi^{2}}\sum F^{ii}.
$$
It follows from \eqref{Gradient-x'} that
\begin{equation}\label{Gradient-target}
	\begin{aligned}
		0\geq\,& \frac{a-2a^{2}}{\phi^{2}}F^{ii}(\nabla_{i}u)^{2}-\frac{a^{2}|\nabla u|^{2}}{4\phi^{2}}\sum F^{ii} \\
		\,& -C(1+|\nabla u|^{\gamma-2})(1+\sum F^{ii}).
	\end{aligned}
\end{equation}

Without loss of generality we may consider $\nabla_{1} u (x_{0}, t_0) \geq \frac{1}{n} |\nabla u (x_{0}, t_0)| > 0$.
Recall that $U_{ij} (x_0, t_0)$ is diagonal. By \eqref{G3} and \eqref{Gradient-1'},  we have
\begin{equation}
	\label{Gradient-0'}
	\begin{aligned}
		U_{11} =    \,& -\frac{a}{\phi}|\nabla u|^2+A^{11} + \frac{\sum_{l \geq 2} \nabla_l u A^{1l}}{\nabla_1 u}\\
		\leq \,& -\frac{a}{\phi}|\nabla u|^2+ C (1 +|\nabla u|^{\gamma_{1}}) < 0
	\end{aligned}
\end{equation}
provided $|\nabla u|$ is sufficiently large.
The appearance of $A^{1l}$ in the first line is due to the diagonality of $\{U_{ij}\}$.
Therefore, by \eqref{Grd1},
$$ f_{1} \geq \nu_{0} \Big(1 + \sum^n_{i = 1} f_{i}+F^{\tau}\Big) $$
and a bound $|\nabla u (x_0, t_0)| \leq C_{3}$ follows from \eqref{Gradient-target} by choosing $a$ sufficiently small such that
$$
\frac{a-2a^{2}}{\phi^{2}}\cdot\frac{\nu_{0}}{n}-\frac{a^{2}}{4\phi^{2}}\geq c_{1}>0
$$
holds for some uniform constant $c_{1}$.

\end{proof}

\begin{remark}
This assumptions follow from \cite{GuanJiao.2016} and \cite{Jiao.2015}.
\eqref{G3} with $\gamma_{1}\in (0,2)$ is more of a technical condition here.
Actually, it will be better to obtain gradient estimates with quadratic growth conditions, i.e $\gamma_{1}=2$, see examples in \cite{TrudingerWang2008}.
The reason why we need \eqref{G3} is the regular assumption of $A$ which make us can not use barrier $\eta=e^{K(\ul u-u)}$ in gradient estimates.
From the proof of Lemma~\ref{lemma2.1} you can see the proof of the barrier is based on the gradient estimates.
This requirement also occurs in Theorem 1.3 (ii) in \cite{jiangtrudinger.2018}.

\eqref{Grd1} is a natural assumption satisfied by many operators such as the k-Hessian operator $\sigma_{k}^{\frac{1}{k}}$.
It is commonly used in deriving gradient estimate, for example in \cite{GuanSpruck.1991}.
\end{remark}

\section{The estimates for \texorpdfstring{$|u_{t}|$}{|ut|} }

In this section, we derive the estimates for $|u_{t}|$.
\begin{thm}
\label{ut_thm}
Suppose that \eqref{f1}--\eqref{f2}, \eqref{sub} and \eqref{ut-condition} hold, $A=A(x,t,\nabla u)$ and $\psi=\psi(x,t,\nabla u)$.
Let $u \in C^{3}(\bM_{T})$ be an admissible solution of \eqref{eqn}-\eqref{eqn-bd} in $M_{T}$.
Then there exists a positive constant $C_{2}$ depending on $|u|_{C^{1}(\bM_{T})}$,
$|\ul u|_{C^{2}(\bM_{T})}$, $|\psi|_{C^{2}(\bM_{T})}$ and other known data such that
\begin{equation}\label{|ut|_est}
	\sup_{\bM_{T}} |u_{t}| \leq C_{4}(1+\sup_{\mathcal{P} M_{T}}|u_{t}|).
\end{equation}
\end{thm}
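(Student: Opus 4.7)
The plan is to bound $\sup_{\bM_T}|u_t|$ from above and below by applying the maximum principle to the auxiliary function $W^\pm := \pm u_t + M\eta$, where $\eta = e^{K(\ul u - u)}$ is the exponential barrier from Lemma~\ref{lemma2.1} and $M > 0$ is a large constant to be chosen. First I would differentiate \eqref{eqn} once in $t$. Substituting $(U_{ij})_t = \nabla_{ij} u_t + A^{ij}_t + A^{ij}_{p_l} \nabla_l u_t$ into the identity $F^{ij}(U_{ij})_t - F^\tau u_{tt} = \psi_t + \psi_{p_k} \nabla_k u_t$ and collecting terms yields
$$\mathcal{L}(u_t) = \psi_t - F^{ij} A^{ij}_t,$$
whose right-hand side is controlled in absolute value by $C(1 + \sum F^{ii})$ thanks to the already-established $C^1$ bound for $u$ and the smoothness of $A$ and $\psi$.

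Next I would use \eqref{ut-condition} to upgrade $\ul u$ to a strict subsolution $\tilde{\ul u}$. On a general Riemannian manifold the interior Euclidean perturbations $\ul u + a e^{bx_1}$ of \cite{JiangTrudingerYang.2014} are unavailable (see Remark~\ref{rk1}), but the coercivity $f(R\mathbf{1}) \to \infty$ permits a $t$-direction correction of the type $\tilde{\ul u} = \ul u + \epsilon(T-t)$, which leaves the spatial gradient and Hessian unchanged while shifting $-\tilde{\ul u}_t = -\ul u_t + \epsilon$ in the admissible direction; by monotonicity \eqref{f1} this turns the subsolution inequality into a strict one. With a strict subsolution in hand, Remark~\ref{rk1} delivers the global barrier
$$\mathcal{L}\eta \geq \epsilon\bigl(1 + \sum F^{ii} + F^\tau\bigr) \quad \text{on all of } \bM_T,$$
bypassing the dichotomy $|\nu_\mu - \nu_\lambda| \gtrless \beta$ needed in Lemma~\ref{lemma2.1}.

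The maximum principle now concludes the argument. Were $W^\pm$ to attain its maximum at an interior point $(x_0,t_0)$, one would have $\mathcal{L}(W^\pm)(x_0,t_0) \leq 0$; combining the two displays above gives
$$\mathcal{L}(W^\pm) \geq M\epsilon\bigl(1 + \sum F^{ii} + F^\tau\bigr) - C\bigl(1 + \sum F^{ii}\bigr) > 0$$
once $M$ is taken sufficiently large, a contradiction. Hence the maximum of $W^\pm$ must lie on $\mathcal{P}M_T$, and since $\eta$ is bounded above and below by constants depending only on $|u|_{C^0(\bM_T)}$ and $|\ul u|_{C^0(\bM_T)}$, the desired estimate \eqref{|ut|_est} follows.

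The most delicate step is the manufacture of the strict subsolution from \eqref{ut-condition}. The naive $t$-perturbation above must be reconciled with the admissibility condition $(\lambda(\nabla^2 \tilde{\ul u} + A[\tilde{\ul u}]), -\tilde{\ul u}_t) \in \Gamma$ throughout $\bM_T$ and with a uniform quantitative gap $F(\tilde{\ul U}, -\tilde{\ul u}_t) \geq \psi + c_0$, which is where the strength of \eqref{ut-condition} enters; this is also the reason that \eqref{ut-condition} can be omitted outright once $\ul u$ is already assumed strictly subsolvent, as noted after Theorem~\ref{thm-main}.
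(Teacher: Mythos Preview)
Your route is genuinely different from the paper's, and the core computation $\mathcal{L}(u_t)=\psi_t-F^{ij}A^{ij}_t$ together with the additive barrier $W^\pm=\pm u_t+M\eta$ is clean. But there is a conceptual slip in the step you flag as ``most delicate'': the $t$-perturbation $\tilde{\ul u}=\ul u+\epsilon(T-t)$ does \emph{not} use \eqref{ut-condition} at all. Since $\nabla\tilde{\ul u}=\nabla\ul u$ and $\nabla^2\tilde{\ul u}=\nabla^2\ul u$, one has $\tilde{\ul U}=\ul U$ and $\psi[\tilde{\ul u}]=\psi[\ul u]$, and the gap
\[
f(\lambda(\ul U),-\ul u_t+\epsilon)-f(\lambda(\ul U),-\ul u_t)=\int_0^\epsilon f_{n+1}(\lambda(\ul U),-\ul u_t+s)\,ds\ \ge\ c\epsilon
\]
follows from \eqref{f1} and the compactness of $\{(\lambda(\ul U),-\ul u_t)\}\subset\Gamma$ alone. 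So your argument, if it closes, actually proves Theorem~\ref{ut_thm} \emph{without} \eqref{ut-condition}; that is precisely the content of the Remark immediately after the theorem, not a use of \eqref{ut-condition}. The remaining load-bearing step is then the implication ``strict subsolution $\Rightarrow$ \eqref{lem2.1} holds globally'' from Remark~\ref{rk1}, which is nontrivial (it rests on the Jiang--Trudinger barrier argument together with a Guan-type key inequality, and implicitly uses \eqref{A2}--\eqref{A3w}); you invoke it but do not verify it.

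By contrast, the paper does not perturb $\ul u$ at all. It takes the multiplicative test function $W=(-u_t)e^{\phi}$ with $\phi=\tfrac{\delta^{1+\alpha}}{2}|\nabla u|^2+\delta u+b\eta$, and runs the dichotomy of Section~\ref{gj-P} directly at the maximum point: in case (i) Lemma~\ref{lemma2.1} supplies $\mathcal{L}\eta\ge\epsilon(1+\sum F^{ii}+F^\tau)$; in case (ii) one has $F^{ii}\ge\gamma_0\sum F^{jj}$, and \eqref{ut-condition} enters only here, through the inequality $f(-u_t\mathbf{1})-\psi\ge 2b_2$ with $b_2$ large, which forces a bound on $-u_t$ via \eqref{ut-2-1}--\eqref{ut-2-end}. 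Your approach trades this case analysis for a single global barrier, at the cost of importing the black box in Remark~\ref{rk1}; the paper's approach is self-contained but needs the extra hypothesis \eqref{ut-condition} to close case (ii).
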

\begin{proof}
We first show that
\begin{equation}\label{ut1}
	\sup_{\bM_{T}} (-u_{t}) \leq C_{4}(1+\sup_{\mathcal{P} M_{T}}|u_{t}|)
\end{equation}
for which we set
\[
W=\sup_{\bM_{T}}(-u_{t})e^{\phi},
\]
where $\phi$ is a positive function to be chosen.

We may assume that $W$ is attained at $(x_{0}, t_{0}) \in \bM_{T}-\mathcal{P}M_{T}$. As in the proof of
Theorem~\ref{p-th0}, we choose an orthonormal local frame $e_{1}, \ldots, e_{n}$ about $x_{0}$ such that
$\nabla_{e_{i}} e_{j}=0$ and $\{U_{ij}(x_{0}, t_{0})\}$ is diagonal.
We may assume $-u_{t}(x_{0}, t_{0})>0$.
Define $v=\log (-u_{t})+\phi$.
At $(x_{0}, t_{0})$, where the function $v$ achieves its maximum, we have, for $i=1, \ldots n$,
\begin{equation}
	\label{ut-1'}
	\nabla_{i}v=\frac{\nabla_i u_{t}}{u_{t}} +\nabla_{i}\phi  = 0
\end{equation}
and
\begin{equation}
	\label{ut-1}
	F^{\tau}v_{t}\geq 0 \geq F^{ii}\nabla_{ii}v=F^{ij}\nabla_{ii}v+(F^{ij}A^{ij}_{p_{k}}-\psi_{p_{k}})\nabla_{k}v.
\end{equation}
Thus, by \eqref{ut-1'} and \eqref{ut-1}, we have
\begin{equation}\label{ut-x}
	\begin{aligned}
		0\geq\,&  F^{ii}\nabla_{ii}v-F^{\tau}v_{t}+(F^{ij}A^{ij}_{p_{k}}-\psi_{p_{k}})\nabla_{k}v\\
		=   \,&  F^{ii}\nabla_{ii}\log(-u_{t})-F^{\tau}(\log(-u_{t}))_{t}
		+F^{ii}\nabla_{ii}\phi-F^{\tau}\phi_{t}\\
		\,& +(F^{ij}A^{ij}_{p_{k}}-\psi_{p_{k}})\nabla_{k}(\log(-u_{t})+\phi)\\
		=   \,&  \frac{1}{u_{t}}\Big(F^{ii}\nabla_{ii}u_{t}-F^{\tau}u_{tt}+(F^{ij}A^{ij}_{p_{k}}-\psi_{p_{k}})\nabla_{k}u_{t}\Big)\\
		\,&  +\mathcal{L}\phi-F^{ii}(\nabla_{i} \phi)^{2}.
	\end{aligned}
\end{equation}

By differentiating equation \eqref{eqn} with respect to $t$, we get
\begin{equation}\label{deqn-t}
	F^{ii}(U_{ii})_{t}-F^{\tau}u_{tt}=\psi_{t}+\psi_{p_{k}}(\nabla_{k}u)_{t}.
\end{equation}
It follows from \eqref{ut-x} and \eqref{deqn-t} that
\begin{equation}\label{ut-target}
	\begin{aligned}
		0\geq \,& \frac{1}{u_{t}}((\psi_{t}-F^{ii}A^{ii}_{t})-F^{ii}(\nabla_{i}\phi)^{2}+\mathcal{L}\phi\\
		\geq \,& \frac{C}{u_{t}}(1+\sum F^{ii})-F^{ii}(\nabla_{i}\phi)^{2}+\mathcal{L}\phi.
	\end{aligned}
\end{equation}
Fix a positive constant $\alpha\in(0,1)$ and let $\phi=\frac{\delta^{1+\alpha}}{2}|\nabla u|^{2}+\delta u+b\eta$,
where $\eta=e^{K(\ul u-u)}$ as in Lemma~\ref{lemma2.1} and $\delta\ll b \ll 1$ are positive constants to be determined.
By straightforward calculations, we have
\[
\nabla_{i}\phi=\delta^{1+\alpha}\sum_{k}\nabla_{k}u\nabla_{ik}u+\delta\nabla_{i}u+b\nabla_{i}\eta,
\]
\[
\phi_{t}=\delta^{1+\alpha}\sum_{k}\nabla_{k}u(\nabla_{k}u)_{t}+\delta u_{t}+b\eta_{t},
\]
\[
\nabla_{ii}\phi=\delta^{1+\alpha}\sum_{k}(\nabla_{ik}u)^{2}+\delta^{1+\alpha}\sum_{k}\nabla_{k}u\nabla_{iik}u
+\delta\nabla_{ii}u+b\nabla_{ii}\eta.
\]
It follows that
\begin{equation}\label{ut-2}
	\begin{aligned}
		\mathcal{L}\phi\geq\,& \delta^{1+\alpha}\nabla_{k}u \Big(F^{ii}\nabla_{iik}u-F^{\tau}(\nabla_{k}u)_{t}
		+F^{ij}A^{ij}_{p_{l}}\nabla_{kl}u-\psi_{p_{l}}\nabla_{kl}u \Big)\\
		\,& +\frac{\delta^{1+\alpha}}{2}F^{ii}U_{ii}^{2}
		-C\delta^{1+\alpha}\sum F^{ii}+\delta\mathcal{L}u+b\mathcal{L}\eta\\
		\geq\,& -C\delta^{1+\alpha}\Big(1+\sum F^{ii} \Big)+\frac{\delta^{1+\alpha}}{2}F^{ii}U_{ii}^{2}
		+\delta\mathcal{L}u+b\mathcal{L}\eta
	\end{aligned}
\end{equation}
and
\begin{equation}\label{ut-3}
	(\nabla_{i}\phi)^{2}\leq C\delta^{2(1+\alpha)}U_{ii}^{2}+Cb^{2}
\end{equation}
since $b \gg \delta$.
Thus, \eqref{ut-target} becomes, by \eqref{ut-2} and \eqref{ut-3},
\begin{equation}\label{ut-main}
	b\mathcal{L}\eta+\frac{\delta^{1+\alpha}}{4}F^{ii}U_{ii}^{2}+\delta\mathcal{L}u
	\leq -\frac{C}{u_{t}}(1+\sum F^{ii})+C\delta^{1+\alpha}\Big(1+\sum F^{ii} \Big)+Cb^{2}\sum F^{ii}.
\end{equation}
We first consider case (i): $|\nu_{\mu}-\nu_{\lambda}|\geq \beta$.
Note that
\[
\delta F^{ii}U_{ii}\geq -\frac{\delta^{1+\alpha}}{4}F^{ii}U_{ii}^{2}-\delta^{1-\alpha}\sum F^{ii}.
\]
It follows from that
\begin{equation}\label{ut-1-mian}
	\begin{aligned}
		\frac{\delta^{1+\alpha}}{4}F^{ii}U_{ii}^{2}+\delta\mathcal{L}u
		\geq\,& -C\delta(1+\sum F^{ii})+\frac{\delta^{1+\alpha}}{4}F^{ii}U_{ii}^{2}\\
		\,& +\delta F^{ii}U_{ii}-\delta F^{\tau}u_{t}\\
		\geq\,& -C\delta^{1-\alpha}(1+\sum F^{ii})
	\end{aligned}
\end{equation}
since $u_{t}(x_{0},t_{0})<0$.
Therefore, by \eqref{ut-main} and \eqref{ut-1-mian}, we have
\begin{equation}\label{ut-1-end}
	b\mathcal{L}\eta
	\leq -\frac{C}{u_{t}}(1+\sum F^{ii})+C\delta^{1-\alpha}\Big(1+\sum F^{ii} \Big)+Cb^{2}\sum F^{ii}.
\end{equation}
Choosing $b$ and $\delta$ such that $b\epsilon_{0}-C\delta^{1-\alpha}-Cb^2\geq b_{1}>0$ for a positive constant $b_{1}$, then a upper bound of $-u_{t}(x_{0},t_{0})$ derived by \eqref{lem2.1}.

Case (ii): $|\nu_{\mu}-\nu_{\lambda}|< \beta$. We see that \eqref{case 2} holds.
Note that
\[
\frac{\delta^{1+\alpha}}{8}F^{ii}U_{ii}^{2}+\delta F^{ii}U_{ii}\geq -2\delta^{1-\alpha}\sum F^{ii}
\]
and
\begin{equation}\label{Ln}
	\begin{aligned}
		\mathcal{L}e^{K(\ul u-u)}=\,&Ke^{K(\ul u-u)}[\mathcal{L}(\ul u-u)+KF^{ij}D_{i}(\ul u-u)D_{j}(\ul u-u)]\\
		\geq\,& Ke^{K(\ul u-u)}\Big\{-\frac{1}{2}F^{ij}A^{ij}_{p_{k},p_{l}}(x,t,\hat{p})D_{k}(\ul u-u)D_{l}(\ul u-u)\\
		\,&+KF^{ij}D_{i}(\ul u-u)D_{j}(\ul u-u)\Big\}\\
		\geq\,& -C\sum F^{ii}
	\end{aligned}
\end{equation}
by the concavity of $F$ and $\psi$, where $C$ depends on $|u|_{C^{1}(\bM_{T})}$ and other known data.
We have, by \eqref{ut-main},
\begin{equation}\label{ut-2-main}
	\begin{aligned}
		\frac{\delta^{1+\alpha}}{8}F^{ii}U_{ii}^{2}-\delta F^{\tau}u_{t}
		\leq\,& -\frac{C}{u_{t}}(1+\sum F^{ii})+C\delta\Big(1+\sum F^{ii} \Big)\\
		\,&+C(\delta^{1-\alpha}+b+b^{2})\sum F^{ii}\\
		\leq\,& -\frac{C}{u_{t}}(1+\sum F^{ii})+C\delta^{1-\alpha}+C\sum F^{ii}.
	\end{aligned}
\end{equation}
Recalling that $u_{t}<0$, we get
\[
F^{ii}U_{ii}-F^{\tau}u_{t}\geq u_{t}\Big(\sum F^{ii}+F^{\tau}\Big)+\frac{1}{4u_{t}}\Big(F^{ii}U_{ii}^{2}+F^{\tau}u_{t}^{2}\Big).
\]
Therefore, by the concavity of $f$, we have
\begin{equation}\label{ut-2-1}
	\begin{aligned}
		-u_{t}\Big(\sum F^{ii}+F^{\tau}\Big)
		\geq\,& f(-u_{t}\textbf{1})-f(\lambda(U),-u_{t})+F^{ii}U_{ii}-F^{\tau}u_{t}\\
		\geq\,& u_{t}\Big(\sum F^{ii}+F^{\tau}\Big)+\frac{1}{4u_{t}}\Big(F^{ii}U_{ii}^{2}+F^{\tau}u_{t}^{2}\Big)\\
		\,& +f(-u_{t}\textbf{1})-\psi[u],
	\end{aligned}
\end{equation}
where $\textbf{1}=(1, \ldots, 1)\in \mathbb{R}^{n+1}$.

Note that $\lim_{t\rightarrow \infty}f(t\textbf{1})=\sup_{\Gamma}f>\sup_{\bM_{T}}\psi[u]$.
It follows from \eqref{f1} that
\begin{equation}\label{ut-2-2}
	f(-u_{t}\textbf{1})-\psi[u]\geq f(-u_{t}\textbf{1})-\sup_{\bM_{T}}\psi[u]:=2b_{2}
\end{equation}
provided $-u_{t}(x_{0},t_{0})$ is big enough, where $b_{2}$ is a positive constant.
Therefore, by \eqref{ut-2-1} and \eqref{ut-2-2}, we have
\begin{equation}\label{ut-2-3}
	-u_{t}\Big(\sum F^{ii}+F^{\tau}\Big)\geq b_{2}+\frac{1}{8u_{t}}\Big(F^{ii}U_{ii}^{2}+F^{\tau}u_{t}^{2}\Big).
\end{equation}
It follows from \eqref{case 2} and \eqref{ut-2-3} that
\begin{equation}\label{ut-2-4}
	\begin{aligned}
		-F^{\tau}u_{t}\geq\,& -2\gamma_{0}u_{t}\Big(\sum F^{ii}+F^{\tau}\Big)\\
		\geq\,& -\gamma_{0}u_{t}\Big(\sum F^{ii}+F^{\tau}\Big)+\gamma_{0}b_{2}
		+\frac{\gamma_{0}}{8u_{t}}\Big(F^{ii}U_{ii}^{2}+F^{\tau}u_{t}^{2}\Big)\\
		\geq\,& -\gamma_{0}u_{t}\sum F^{ii}+\gamma_{0}b_{2}
		+\frac{\gamma_{0}}{8u_{t}}F^{ii}U_{ii}^{2},
	\end{aligned}
\end{equation}
where $\gamma_{0}:=\frac{\beta}{2\sqrt{n+1}}>0$.

Without loss of generality, we suppose $-u_{t}\geq \gamma_{0}\delta^{-\alpha}$ for fixed $\delta$.
Substituting \eqref{ut-2-4} in \eqref{ut-2-main} we derive
\begin{equation}\label{ut-2-end}
	(-\delta\gamma_{0}u_{t}-C)\sum F^{ii}+\delta\gamma_{0} b_{2}-C\delta^{1-\alpha}\leq -\frac{C}{u_{t}}(1+\sum F^{ii}).
\end{equation}
By \eqref{ut-condition}, we see that $b_{2}$ can be sufficiently large,
then a bound is derived from \eqref{ut-2-end} and therefore \eqref{ut1} holds.

Similarly, we can show
\begin{equation}\label{ut2}
	\sup_{\bM_{T}} u_{t} \leq C_{4}(1+\sup_{\mathcal{P} M_{T}}|u_{t}|)
\end{equation}
by letting
\[
\phi=\frac{\delta^{1+\alpha}}{2}|\nabla u|^{2}-\delta u+b(\ul u-u).
\]

Combining \eqref{ut1} and \eqref{ut2}, the proof is finished.
\end{proof}

\begin{remark}
If $\ul u$ is a strict subsolution, then Theorem~\ref{ut_thm} follows without \eqref{ut-condition}.
In face, in this case we have \eqref{lem2.1} holds without classification.
Let $W=\sup_{\bM_{T}}|u_{t}|e^{a\phi}$ and $\phi=\eta$ in Lemma~\ref{lem2.1}, the theorem will be proved easily.
\end{remark}

By \eqref{comp1} and \eqref{comp2} we can the short time existence as Theorem~15.9 in \cite{Lieberman.1996}.
So without of loss of generality, we may assume that $\varphi$ is defined on $M\times [0, t_{0}]$ for some small constant $t_{0}>0$ and
\begin{equation}\label{comp3}
f(\lambda(\nabla^{2}\varphi(x,0)+A[\varphi]), -\varphi_{t}(x,0))=\psi[\varphi]\;\;\forall x \in \bM.
\end{equation}
Since that $u_{t}=\varphi_{t}$ on $SM_{T}$ and \eqref{comp3}, we can obtain the estimate
\begin{equation}\label{ut}
\sup_{\bM_{T}}|u_{t}|\leq C_{5}.
\end{equation}

\section{Global estimates for second derivatives}

In this section, we derive the global estimates for the second order derivatives. In particular, we prove the following maximum principle.
\begin{thm}
Let $u\in C^{4}(\bM_{T})$ be an admissible solution of \eqref{eqn} in $M_{T}$.
Suppose that \eqref{f1}--\eqref{f2} and \eqref{sub}--\eqref{A3w} hold. Then
\begin{equation}\label{Glb_est}
	\sup_{\bM_{T}}|\nabla^{2} u|\leq C_{1}(1+\sup_{\mathcal{P}M_{T}}|\nabla^{2}u|),
\end{equation}
where $C_{1}>0$ depends on $|u|_{C^{1}(\bM_{T})}$, $|\ul u|_{C^{1}(\bM_{T})}$, $|u_{t}|_{C^{0}(\bM_{T})}$,
$|\psi|_{C^{2}(\bM_{T})}$ and other known data.
\end{thm}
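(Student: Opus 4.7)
\medskip

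\noindent\textbf{Proof proposal.} The plan is to mimic the test-function strategy used for the $|u_t|$ bound in Theorem~\ref{ut_thm}, but now applied to the largest eigenvalue of $U = \nabla^2 u + A[u]$. Concretely, I would consider
\[
W = \max_{(x,t)\in\bM_T}\max_{\xi\in T_xM,\,|\xi|_g=1} U(\xi,\xi)\, e^{\phi},
\qquad
\phi = \tfrac{\delta^{1+\alpha}}{2}|\nabla u|^2+\delta u+b\eta,
\]
with $\eta=e^{K(\ul u-u)}$ the barrier of Lemma~\ref{lemma2.1}, and $0<\delta\ll b\ll 1$, $\alpha\in(0,1)$ to be fixed. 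Assume $W$ is attained at $(x_0,t_0)\notin\mathcal{P}M_T$ in a direction which, in a local frame diagonalizing $U(x_0,t_0)$ with $\nabla_{e_i}e_j=0$, we may take to be $e_1$, so $U_{11}(x_0,t_0)$ is the largest eigenvalue. Working with $v:=\log U_{11}+\phi$, the interior maximum conditions give $\nabla_i v=0$ for $i=1,\dots,n$ and $F^{\tau}v_t\geq 0\geq F^{ii}\nabla_{ii}v$, so
\[
0\geq \mathcal{L}v = \frac{1}{U_{11}}\bigl(F^{ii}\nabla_{ii}U_{11}-F^{\tau}(U_{11})_t+(F^{ij}A^{ij}_{p_k}-\psi_{p_k})\nabla_k U_{11}\bigr)
-F^{ii}(\nabla_i\phi)^2/U_{11}^{\,2}\cdot U_{11}+\mathcal{L}\phi,
\]
after a short rearrangement analogous to \eqref{ut-x}.

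\medskip

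\noindent The next step is to produce a usable lower bound on $F^{ii}\nabla_{ii}U_{11}-F^{\tau}(U_{11})_t$. Differentiating \eqref{eqn} twice along $e_1$ and commuting covariant derivatives (picking up curvature terms with $|u|_{C^1}$-bounded coefficients) gives
\[
F^{ii}\nabla_{ii}U_{11}-F^{\tau}(U_{11})_t
\;\geq\; -F^{ij,kl}\nabla_1 U_{ij}\nabla_1 U_{kl}
-2F^{ij,\tau}\nabla_1 U_{ij}\nabla_1 u_t-F^{\tau,\tau}(\nabla_1 u_t)^2
+ \mathcal{E},
\]
where $\mathcal{E}$ collects all the curvature terms, the $A^{ij}_{x_1x_1}$, $A^{ij}_{x_1 p_k}$, $A^{ij}_{tt}$ pieces, and critically the second-order tensor term
\[
F^{ii}A^{ii}_{p_kp_l}(x_0,t_0,\nabla u)\nabla_{1k}u\,\nabla_{1l}u,
\]
which comes from applying $\nabla_1\nabla_1$ to $A^{ii}(x,t,\nabla u)$ and then using \eqref{deqn-1}-type identities differentiated once more in $x_1$, together with the first-order condition $\nabla_1 v=0$ to replace $\nabla_1 U_{11}$ by $-U_{11}\nabla_1\phi$. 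By \eqref{f2} the first three terms on the right are nonnegative, so can be dropped. The new ingredient, by \eqref{A3w-1-1}, is that
\[
-F^{ii}A^{ii}_{p_kp_l}\nabla_{1k}u\,\nabla_{1l}u\;\geq\;-\overline{\lambda}\Bigl(\epsilon_1\sum F^{ii}|\nabla_1\nabla u|^2+\tfrac{1}{\epsilon_1}F^{ij}\nabla_{1i}u\,\nabla_{1j}u\Bigr),
\]
and since $\{U_{ij}\}$ is diagonal at $(x_0,t_0)$, $\nabla_{1i}u=U_{1i}\delta_{1i}-A^{1i}$ plus lower-order, so $F^{ij}\nabla_{1i}u\nabla_{1j}u\leq CF^{11}U_{11}^2+C\sum F^{ii}$.

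\medskip

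\noindent The bulk of the argument then parallels the $|u_t|$-estimate in Theorem~\ref{ut_thm}. Using $\mathcal{L}\phi\geq -C\delta^{1+\alpha}(1+\sum F^{ii})+\tfrac{\delta^{1+\alpha}}{2}F^{ii}U_{ii}^2+\delta\mathcal{L}u+b\mathcal{L}\eta$ as in \eqref{ut-2}, and the bound $(\nabla_i\phi)^2\leq C\delta^{2(1+\alpha)}U_{ii}^2+Cb^2$ as in \eqref{ut-3}, one reaches a master inequality of the form
\[
b\mathcal{L}\eta+\tfrac{\delta^{1+\alpha}}{4}F^{ii}U_{ii}^2+\delta\mathcal{L}u\leq C(\delta^{1+\alpha}+b^2)\sum F^{ii}+CU_{11}+C,
\]
to which one applies the dichotomy of the paper. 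In case (i) $|\nu_\mu-\nu_\lambda|\geq\beta$, Lemma~\ref{lemma2.1} gives $\mathcal{L}\eta\geq\epsilon_0(1+\sum F^{ii}+F^{\tau})$, and by absorbing $\delta\mathcal{L}u\geq -C\delta^{1-\alpha}(1+\sum F^{ii})$ against the quadratic $F^{ii}U_{ii}^2$ term (by the $-\tfrac{\delta^{1+\alpha}}{4}F^{ii}U_{ii}^2-\delta^{1-\alpha}\sum F^{ii}$ trick), choosing $b\epsilon_0-C\delta^{1-\alpha}-Cb^2\geq b_1>0$ yields an upper bound on $U_{11}(x_0,t_0)$. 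In case (ii) $|\nu_\mu-\nu_\lambda|<\beta$, the control \eqref{case 2} gives $F^{ii}\geq\gamma_0\sum F^{jj}$ uniformly, and $F^{ii}U_{ii}^2\geq\gamma_0 U_{11}^2\sum F^{jj}$, which is the dominant term and closes the argument once $U_{11}$ is large enough.

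\medskip

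\noindent The main obstacle I expect is the bookkeeping around the $A^{ij}_{p_kp_l}$ term. The MTW condition only gives goodness when tested against a vector $\eta$ orthogonal to $\xi$, so absorbing it requires splitting $\nabla\nabla u$ into its $e_1$-component and the rest, and trading the non-orthogonal part against the $\tfrac{\delta^{1+\alpha}}{4}F^{ii}U_{ii}^2$ reserve and, in case~(i), against the surplus $\epsilon_0\sum F^{ii}$ from the barrier. Once this is arranged by first fixing $\epsilon_1$ small, then $K$ large, then $\delta,b$ as above, the cascade of choices $\delta\ll b\ll 1$ and $\alpha\in(0,1)$ runs exactly as in Theorem~\ref{ut_thm}, and one concludes $\sup_{\bM_T}|\nabla^2u|\leq C_1(1+\sup_{\mathcal{P}M_T}|\nabla^2u|)$.
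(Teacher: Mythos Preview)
Your plan is broadly correct and would lead to the estimate, but it takes a genuinely different route from the paper's proof, and your case~(ii) is underspecified.

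\medskip

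\textbf{Where you diverge from the paper.} The paper does \emph{not} discard the second--order concavity terms $-F^{ij,kl}\nabla_1 U_{ij}\nabla_1 U_{kl}$ etc. Instead it keeps them, combines them with the bad gradient term $\frac{1}{U_{11}^{2}}F^{ii}(\nabla_i U_{11})^2$ into a single error $E$, and then applies the Andrews--Gerhardt inequality together with an index splitting $J=\{i:U_{ii}\le -sU_{11}\}$, $K=\{i:U_{ii}>-sU_{11}\}$. This is what allows the paper to take the simpler test function $\phi=\tfrac{\delta}{2}|\nabla u|^2+b\eta$ with $b\ge 1$ large: the potentially dangerous $Cb^2\sum F^{ii}$ contribution lands only on the indices in $J$, where $|U_{ii}|\ge sU_{11}$, and is then absorbed by $\tfrac{\delta}{2}F^{ii}U_{ii}^2$. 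Your alternative (drop the concavity terms, convert $F^{ii}(\nabla_i U_{11})^2/U_{11}^2$ to $F^{ii}(\nabla_i\phi)^2$ via the critical point, and absorb $(\nabla_i\phi)^2\le C\delta^{2(1+\alpha)}U_{ii}^2+Cb^2$ against the reserve $\tfrac{\delta^{1+\alpha}}{2}F^{ii}U_{ii}^2$) also closes, but only because you take $b\ll 1$; this is the trade you are making, and it is worth making explicit. For the $A^{ij}_{p_kp_l}$ terms, the paper does not use \eqref{A3w-1-1} but rather the direct orthogonality: at the diagonal point $\nabla_{1k}u\approx U_{11}\delta_{1k}$, so the leading pieces are $F^{ii}A^{11}_{p_ip_i}U_{ii}^2$ and $-F^{ii}A^{ii}_{p_1p_1}U_{11}^2$; the $i=1$ parts cancel and MTW gives the sign for $i\ge 2$, leaving only the harmless $-\tfrac{C}{U_{11}}\sum_{i\ge 2}F^{ii}U_{ii}^2$. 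Your route through \eqref{A3w-1-1} produces an extra $-C\epsilon_1 U_{11}\sum F^{ii}$ which is avoidable; your last paragraph suggests you see this, so just do the orthogonal splitting directly.

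\medskip

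\textbf{A genuine gap in case (ii).} Saying ``$F^{ii}U_{ii}^2\ge \gamma_0 U_{11}^2\sum F^{jj}$ is the dominant term'' is not enough: your right--hand side carries an absolute constant (and, in your own display, a bare $CU_{11}$) that is \emph{not} multiplied by $\sum F^{ii}$, and $\sum F^{ii}$ is not bounded below a priori. The paper handles this by invoking the concavity of $f$ against the diagonal ray to get $|\hat\lambda|\big(\sum F^{ii}+F^{\tau}\big)\ge b_3>0$ once $|\hat\lambda|$ is large (the analogue of \eqref{ut-2-1}--\eqref{ut-2-3}), which then furnishes both a $c_2|\hat\lambda|^2\sum F^{ii}$ term and a $c_2 b_3|\hat\lambda|$ term with \emph{no} $\sum F^{ii}$ factor; the latter is what kills the absolute constants. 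You need to insert this step.
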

\begin{proof}
Set
\[W = \max_{(x,t) \in \bar{M_T}} \max_{\xi \in T_x M, |\xi| = 1}
(\nabla_{\xi\xi} u + A^{\xi \xi} (x, t, \nabla u) )e^\phi,\]
as in \cite{GuanJiao.2016}, where $\phi$ is a function to be determined. It suffices to estimate $W$.
We may assume $W$ is achieved at $(x_{0}, t_{0}) \in \bM_T - \mathcal{P} M_T$.
Choose a smooth orthonormal local frame $e_{1}, \ldots, e_{n}$ about $x_{0}$
such that $\nabla_i e_j = 0$,  and
$\{U_{ij}\}$ is diagonal at $(x_0, t_0)$.
We assume $U_{11} (x_0, t_0) \geq \ldots \geq U_{nn} (x_0, t_0)$ and,
without loss of generality, we assume $U_{11}>1$.

Define $v=\log U_{11} + \phi$.
At $(x_{0}, t_{0})$, where the function
$v$ attains its maximum, we have, for each $i = 1, \ldots, n$,
\begin{equation}
	\label{2nd-gl-1'}
	\nabla_{i}v=\frac{\nabla_i U_{11}}{U_{11}} + \nabla_i \phi = 0
\end{equation}
and
\begin{equation}
	\label{2nd-gl-1}
	F^{\tau}v_{t}\geq 0 \geq F^{ii}\nabla_{ii}v.
\end{equation}
Thus, by \eqref{2nd-gl-1}, we have
\begin{equation}\label{2nd-gl-x}
	\begin{aligned}
		0\geq\,&  F^{ii}\nabla_{ii}v-F^{\tau}v_{t}\\
		=   \,&  F^{ii}\nabla_{ii}(\log U_{11})-F^{\tau}(\log U_{11})_{t}+F^{ii}\nabla_{ii}\phi-F^{\tau}\phi_{t}\\
		=   \,& -\frac{1}{U_{11}^{2}}F^{ii}\nabla_{i}U_{11}^{2}
		+\frac{1}{U_{11}}\Big(F^{ii}\nabla_{ii}U_{11}-F^{\tau}(U_{11})_{t}\Big)\\
		\,& +F^{ii}\nabla_{ii}\phi-F^{\tau}\phi_{t}.
	\end{aligned}
\end{equation}

Differentiating Eq (\ref{eqn}) twice, we obtain, by \eqref{A2}, \eqref{deqn-1}. \eqref{hess-A70} and \eqref{2nd-gl-1'},
\begin{equation}
	\label{deqn-2}
	\begin{aligned}
		F^{ii} \nabla_{11} U_{ii}\,&+F^{ij,kl} \nabla_{1} U_{ij} \nabla_{1} U_{kl}-2F^{ij,\tau}\nabla_{1}U_{ij}\nabla_{1}u_{t} \\
		\,&+F^{\tau, \tau}(\nabla_{1}u_{t})^{2}- F^{\tau}\nabla_{11} u_t \\
		\geq\,&-CU_{11}+\psi_{p_{k}p_{l}}\nabla_{1k}u\nabla_{1l}u+\psi_{p_{k}}\nabla_{11l}u\\
		\geq\,&-CU_{11}-U_{11}\psi_{p_{k}}\nabla_{k}\phi.
	\end{aligned}
\end{equation}

Note that the regular condition of $A$ means $A^{ii}_{p_{1}p_{1}}\leq 0$ for $i\neq 1$.
Therefore by \eqref{deqn-1} and \eqref{2nd-gl-1'}, we have
\begin{equation}\label{2nd-gl-A}
	\begin{aligned}
		F^{ii}(\nabla_{ii}A^{11}-\nabla_{11}A^{ii})
		\geq\,& F^{ii}(A^{11}_{p_{k}}\nabla_{iik}u-A^{ii}_{p_{k}}\nabla_{11k}u)-CU_{11}\sum F^{ii}\\
		\,&+F^{ii}(A^{11}_{p_{i}p_{i}}U_{ii}^{2}-A^{ii}_{p_{1}p_{1}}U_{11}^{2})\\
		\geq\,& U_{11}F^{ii}A^{ii}_{p_{k}}\nabla_{k}\phi+F^{\tau}A^{11}_{p_{k}}\nabla_{k}u_{t}-CU_{11}\sum F^{ii}\\
		\,&-CU_{11}-C\sum_{i\geq 2}F^{ii}U_{ii}^{2}.
	\end{aligned}
\end{equation}
Note that
$$
\begin{aligned}
	\nabla_{ijkl} v - \nabla_{klij} v
	=& R^m_{ljk} \nabla_{im} v  + \nabla_i R^m_{ljk} \nabla_m v
	+ R^m_{lik} \nabla_{jm} v \\
	& + R^m_{jik} \nabla_{lm} v
	+ R^m_{jil} \nabla_{km} v + \nabla_k R^m_{jil} \nabla_m v.
\end{aligned}
$$
Thus we have
\begin{equation}\label{2nd-gl-4th}
	\nabla_{ii}U_{11}\geq\nabla_{11}U_{ii}+\nabla_{ii}A^{11}-\nabla_{11}A^{ii}-CU_{11}.
\end{equation}
It follows from \eqref{deqn-2}, \eqref{2nd-gl-A} and \eqref{2nd-gl-4th} that
\begin{equation}\label{2nd-gl-2}
	\begin{aligned}
		F^{ii}\nabla_{ii}U_{11}-F^{\tau}(U_{11})_{t}
		\geq\,& F^{ii}\nabla_{11}U_{ii}-F^{\tau}\nabla_{11}u_{t}-CU_{11}\sum F^{ii}\\
		\,&-F^{ii}(\nabla_{ii}A^{11}-\nabla_{11}A^{ii})-F^{\tau}(A^{11})_{t}\\
		\geq\,&-F^{ij,kl} \nabla_{1} U_{ij} \nabla_{1} U_{kl}-2F^{ij,\tau}\nabla_{1}U_{ij}\nabla_{1}u_{t}\\
		\,&+F^{\tau, \tau}(\nabla_{1}u_{t})^{2}+U_{11}(F^{ii}A^{ii}_{p_{k}}-\psi_{p_{k}})\nabla_{k}\phi\\
		\,&-C\sum_{i\geq 2}F^{ii}U_{ii}^{2}-CU_{11}(1+\sum F^{ii}).
	\end{aligned}
\end{equation}

Thus, by \eqref{2nd-gl-x} and \eqref{2nd-gl-2}, we have, at $(x_{0}, t_{0})$,
\begin{equation}
	\label{2nd-gl-target}
	\mathcal{L}\phi \leq \frac{C}{U_{11}}\sum_{i\geq 2}F^{ii}U_{ii}^{2}+C(1+\sum F^{ii})+E,
\end{equation}
where
\[
E=\frac{1}{U_{11}^{2}}F^{ii}(\nabla_{i}U_{11})^{2}+\frac{1}{U_{11}}(F^{ij, kl}\nabla_{1}U_{ij}\nabla_{1}U_{kl}
-2F^{ij, \tau}\nabla_{1}U_{ij}\nabla_{1}u_{t}+F^{\tau,\tau}(\nabla_{1}u_{t})^{2}).
\]

Let $\eta=e^{K(\ul u-u)}$. Define
\[ \phi = \frac{\delta |\nabla u|^2}{2} + b \eta, \]
where $b$ and $\delta$ are undetermined constants such that $0 < \delta < 1 \leq b$.
We find, at $(x_0, t_0)$,
\begin{equation}
	\label{dphi-1}
	\nabla_{i} \phi
	= \delta \nabla_{j} u \nabla_{ij} u + b \nabla_{i} \eta
	= \delta \nabla_i u U_{ii} - \delta \nabla_{j} u A^{ij} + b \nabla_{i} \eta,
\end{equation}
\begin{equation}
	\label{dphi-t}
	\phi_t = \delta \nabla_{j} u (\nabla_{j} u)_t + b \eta_t,
\end{equation}
\begin{equation}
	\label{dphi-2}
	\nabla_{ii} \phi
	\geq \frac{\delta}{2} U_{ii}^2 - C \delta + \delta \nabla_{j} u \nabla_{iij} u
	+ b \nabla_{ii} \eta.
\end{equation}
From \eqref{hess-A70} and \eqref{deqn-1}, we derive
\begin{equation}
	\label{ps-gs7}
	\begin{aligned}
		F^{ii} \nabla_{j} u \nabla_{iij} u \geq \,& F^{ii} \nabla_{j} u (\nabla_{j} U_{ii} - \nabla_{j} A^{ii})
		- C |\nabla u|^2 \sum F^{ii} \\
		\geq \,& (\psi_{p_{k}} - F^{ii} A^{ii}_{p_k}) \nabla_{j} u \nabla_{jk} u
		+ F^{\tau}\nabla_j u \nabla_j (u_t) \\
		\,& - C (1 + \sum F^{ii}).
	\end{aligned}
\end{equation}
Therefore,
\begin{equation}
	\label{ps-S100}
	\begin{aligned}
		\mathcal{L} \phi \geq b \mathcal{L} \eta + \frac{\delta}{2} F^{ii} U_{ii}^2
		- C \delta(1+\sum F^{ii}).
	\end{aligned}
\end{equation}
Next, by \eqref{dphi-1} we get
\begin{equation}
	\label{bs-gs3.5}
	\begin{aligned}
		(\nabla_{i} \phi)^2
		\leq C \delta^2 (1 + U_{ii}^2) + 2 b^2 (\nabla_{i} (\ul u - u))^2
		\leq C \delta^2 U_{ii}^2 + C b^2.
	\end{aligned}
\end{equation}

Now we estimate $E$ as in \cite{Guan.2014} and \cite{GuanJiao.2015} (see \cite{BaoDongJiao.2016} for details). Let
\[  \begin{aligned}
	J \,& = \{i: U_{ii} \leq - s U_{11}\}, \;\;
	K = \{i:  U_{ii} > - s U_{11} \},
\end{aligned} \]
where $0 < s \leq 1/3$ is a fixed number. Using an
inequality of Andrews~\cite{Andrews.1994}
and Gerhardt~\cite{Gerhardt.1996}, we have, by \eqref{bs-gs3.5},
\begin{equation}
	\label{gj-S130}
	\begin{aligned}
		- F^{ij, kl} \nabla_1 U_{ij} \nabla_1 U_{kl}
		\geq \,& \sum_{i \neq j} \frac{F^{ii} - F^{jj}}{U_{jj} - U_{ii}}
		(\nabla_1 U_{ij})^2 \\
		\geq \,& 2 \sum_{i \geq 2} \frac{F^{ii} - F^{11}}{U_{11} - U_{ii}}
		(\nabla_1 U_{i1})^2 \\
		\geq \,& \frac{2 (1-s)}{(1+s) U_{11}} \sum_{i \in K} (F^{ii} - F^{11})
		((\nabla_i U_{11})^2 - C U_{11}^2/s).
	\end{aligned}
\end{equation}
Thus, we obtain
\begin{equation}
	\label{gj-S140}
	\begin{aligned}
		E
		\leq \,& \frac{1}{U_{11}^2} \sum_{i \in J} F^{ii} (\nabla_i U_{11})^2
		+ C \sum_{i \in K} F^{ii}
		+ \frac{C F^{11}}{U_{11}^2}  \sum_{i \in K} (\nabla_i U_{11})^2  \\
		\leq \,& \sum_{i \in J} F^{ii} (\nabla_i \phi)^2
		+  C \sum F^{ii} + C F^{11} \sum (\nabla_i \phi)^2 \\
		\leq \,& C b^2 \sum_{i \in J} F^{ii}  + C \delta^2 \sum F^{ii} U_{ii}^2
		+  C \sum F^{ii} + C (\delta^2 U_{11}^2 + b^2) F^{11}.
	\end{aligned}
\end{equation}
Therefore, by \eqref{2nd-gl-target}, \eqref{ps-S100}, \eqref{bs-gs3.5} and \eqref{gj-S140}, we have
\begin{equation}
	\label{ps-S150}
	\begin{aligned}
		b \mathcal{L} \eta
		\leq \,& \Big(C \delta^2 + \frac{C}{U_{11}} - \frac{\delta}{2}\Big) F^{ii} U_{ii}^2
		+ C b^2 \sum_{i \in J} F^{ii}\\
		\,&  + C (\delta^2 U_{11}^2 + b^2) F^{11}  + C (1 + \sum F^{ii}).
	\end{aligned}
\end{equation}

Case (i): $|\nu_{\mu}-\nu_{\lambda}|\geq \beta$. It follows from \eqref{lem2.1} and \eqref{ps-S150} that
\[
\begin{aligned}
	( b \varepsilon-C) (1 + \sum F^{ii}) \leq \,& \Big(C \delta^2 + \frac{C}{U_{11}} - \frac{\delta}{2}\Big) F^{ii} U_{ii}^2+C b^2 \sum_{i \in J} F^{ii} \\
	\,&  + C (\delta^2 U_{11}^2 + b^2) F^{11} .
\end{aligned}
\]
Choosing $b$ sufficiently large such that $b \varepsilon - C \geq \frac{b \varepsilon}{2}$, we have
\[
\begin{aligned}
	\frac{b \varepsilon}{2} (1 + \sum F^{ii}) \leq \,& \Big(C \delta^2 + \frac{C}{U_{11}} - \frac{\delta}{2}\Big) F^{ii} U_{ii}^2 +C b^2 \sum_{i \in J} F^{ii}\\
	\,&  + C (\delta^2 U_{11}^2 + b^2) F^{11} .
\end{aligned}
\]
and we can get a bound $U_{11} (x_0, t_0) \leq C$ by choosing $\delta$ sufficiently small since $|U_{ii}| \geq s U_{11}$ for $i \in J$.
Thus we derive a bound of $U_{11}(x_{0},t_{0})$ and therefore \eqref{Glb_est} holds.

Case (ii): $|\nu_{\mu}-\nu_{\lambda}|< \beta$. For every fixed $C>0$, choosing $\delta$ sufficiently small such that $\frac{\delta}{4}- C\delta^{2}\geq \delta_{0}>0$.
Without loss of generality, suppose $U_{11}\geq \frac{C}{\delta_{0}}$ for otherwise we are done. Then \eqref{ps-S150} becomes
\begin{equation}\label{g-case-2-main}
	\begin{aligned}
		b \mathcal{L} \eta+ \frac{\delta}{4} F^{ii} U_{ii}^2
		\leq \,&C b^2 \sum_{i \in J} F^{ii}  + C (\delta^2 U_{11}^2 + b^2) F^{11}  + C (1 + \sum F^{ii}).
	\end{aligned}
\end{equation}

Next, let $\hat{\lambda}:=\lambda(U(x_{0},t_{0}))$.
In the view of \eqref{ut-2-1}--\eqref{ut-2-3}, we have
\begin{equation}\label{g-case-2-1}
	|\hat{\lambda}|\Big(\sum F^{ii}+F^{\tau}\Big)\geq b_{3},
\end{equation}
where $b_{3}:=\frac{1}{2} \Big(f(|\hat{\lambda}|\textbf{1})-\sup_{\bM_{T}}\psi[u]\Big)>0$ provided $|\hat{\lambda}|$ is large enough.
By \eqref{case 2} and \eqref{g-case-2-1}, we have
$$
\frac{\delta}{4}F^{ii}U_{ii}^{2}\geq 2c_{2}|\hat{\lambda}|^{2}\Big(\sum F^{ii}+F^{\tau}\Big)\geq c_{2}|\hat{\lambda}|^{2}\Big(\sum F^{ii}+F^{\tau}\Big)+c_{2}b_{3}|\hat{\lambda}|,
$$
where $c_{2}=\frac{\delta\beta}{8\sqrt{n+1}}$. Therefore, it follows from \eqref{Ln} and \eqref{g-case-2-main} that
\begin{equation}\label{g-case-2-end}
	c_{2}|\hat{\lambda}|^{2}\Big(\sum F^{ii}+F^{\tau}\Big)+c_{2}b_{3}|\hat{\lambda}|
	\leq  C\delta^2 U_{11}^2F^{11}  + C(1 + \sum F^{ii}).
\end{equation}
Then a bound for $U_{11}$ is derived since $\delta\in (0,1)$ and $U_{11}\leq |\hat{\lambda}|$.

\end{proof}

\section{Boundary estimates for second derivatives}

In this section, we establish the estimates of second order derivatives on parabolic boundary
$\mathcal{P} M_T$. We may assume $\varphi \in C^4 (\bM_T)$. We shall establish the estimate
\begin{equation}\label{uii-s-1}
\max_{\mathcal{P}M_{T}}|\nabla^{2}u|\leq C_{2}
\end{equation}
for some positive constant $C_{2}$ depending on $|u|_{C^{1}{\bM_{T}}}$,
$|u_{t}|_{C^{0}{\bM_{T}}}$, $|\ul u|_{C^{2}{\bM_{T}}}$,
$|\psi|_{C^{4}{\bM_{T}}}$, and other known data.

Fix a point $(x_{0}, t_{0}) \in S M_T$. We shall choose
smooth orthonormal local frames $e_1, \ldots, e_n$ around $x_0$ such that
when restricted to $\partial M$, $e_n$ is the interior normal to $\partial M$ along the boundary when restricted
to $\partial M$.
Since $u - \ul{u} = 0$ on $S M_T$ we have
\begin{equation}
\label{hess-a200}
\nabla_{\alpha \beta} (u - \ul{u})
= -  \nabla_n (u - \ul{u}) \varPi (e_{\alpha}, e_{\beta}), \;\;
\forall \; 1 \leq \alpha, \beta < n \;\;
\mbox{on  $S M_T$},
\end{equation}
where 
$\varPi$ denotes the second fundamental form of $\partial M$.
Therefore,
\begin{equation}
\label{hess-E130}
|\nabla_{\alpha \beta} u| \leq  C,  \;\; \forall \; 1 \leq \alpha, \beta < n
\;\;\mbox{on} \;\; S M_T.
\end{equation}

Let $\rho (x)$ and $d(x)$ denote the distance from $x \in M$ to $x_{0}$ and $\partial M$ respectively
and set
\[M_{T}^{\delta} = \{X = (x, t) \in M \times (0,T]:
\rho (x) < \delta\}.\]

Now we shall use a perturbation method to obtain a strict subsolution from a non-strict one.
Let $s(x,t)=\ul u(x,t)+a(h(x)-1)$ and $S=\{\nabla_{ij}s+A[s]\}$, where $h(x)=e^{bd(x)}$, $a$ and $b$ are constants to be determined.
We wish to show $\tilde{M}=(F(S,-s_{t})-\psi[s])-(F(\ul U, -{\ul u}_{t})-\psi[\ul u])>0$ for some $a$ and $b$.
Note that $d$ is smooth near boundary and
\[
S_{ij}-{\ul U}_{ij}=ab^{2}h\nabla_{i}d\nabla_{j}d+abh\nabla_{ij}d+abhA^{ij}_{p_{k}}(x, t, \hat{p}_{1})\nabla_{k}d,
\]
where $\hat{p}_{1}=\nabla \ul u+\theta_{1} abh \nabla d$ for some $\theta_{1}\in (0,1)$.
Therefore, if $a$ is small enough for fixed $b$, $s$ is admissible since $\ul u$ is admissible and $\Gamma$ is open.
Let $F_{0}^{ij}=F^{ij}(\ul U, -{\ul u}_{t})$, there is a positive constant $c_{3}$ such that $F_{0}^{ij}\nabla_{i}d\nabla_{j}d\geq c_{3}>0$ since $|\nabla d(x)|\equiv 1$.
Thus, we derive
\[
\begin{aligned}
\tilde{M}\geq\,& F_{0}^{ij}(ab^{2}h\nabla_{i}d\nabla_{j}d+abh\nabla_{ij}d+abhA^{ij}_{p_{k}}(x, t, \tilde{p})\nabla_{k}d)\\
\,& -abh\psi_{p_{k}}(x, t, \hat{p}_{2})\nabla_{k}d\\
\geq\,& ab^{2}hc_{3}-abC
>       0,
\end{aligned}
\]
where  $b>C/c_{3}\geq C/hc_{3}$ and $\hat{p}_{2}=\nabla \ul u+\theta_{2} abh \nabla d$ for some $\theta_{2}\in (0,1)$.

Therefore a strict admissible subsolution with same boundary condition is derived near boundary and \eqref{lem2.1} holds without the assumption $|\nu_{\mu}-\nu_{\lambda}|\geq \beta$, see Remark~\ref{rk1}. For convenience, we still use $\ul u$ to denote the strict subsolution below.

For the mixed tangential-normal and pure normal second derivatives
at $(x_0, t_0)$, we shall use the following barrier function as in
\cite{Guan.2014},
\begin{equation}
\label{bd-0}
\varPsi = A_1 v
+ A_2 \rho^2 - A_3 \sum_{l < n} |\nabla_l (u - \varphi)|^2,
\end{equation}
where$$v=1-\eta=1-e^{K(\ul u-u)}$$
and $A_1$, $A_2$, $A_3$ are positive constants to be chosen.
By differentiating Eq \eqref{eqn} and
\[
\nabla_{ij}(\nabla_{k}u)=\nabla_{ijk}u+\Gamma_{ik}^{l}\nabla_{jl}u+\Gamma_{jk}^{l}\nabla_{il}u+\nabla_{\nabla_{ij}e_{k}}u,
\]
we obtain, by straightforward calculation,
\begin{equation}
\label{eq3-1}
\begin{aligned}
	\mathcal{L} (\nabla_k (u - \varphi))  \leq \,& C \Big(1 + \sum f_i |\lambda_i|
	+ \sum f_{i}+F^{\tau}\Big),
	\;\; \forall \; 1 \leq k \leq n,
\end{aligned}
\end{equation}
where $\lambda  = \lambda (\nabla^2 u + A [u])$.

The following lemma is crucial to construct barrier functions.
\begin{lem}
\label{barrier}
Suppose that \eqref{f1}--\eqref{f5} and \eqref{sub}--\eqref{A3w} hold. Then for
any positive constant $K_{1}$ there exist uniform positive constants $t,
\delta$ sufficiently small, and $A_1$, $A_2$, $A_3$
sufficiently large such that $\varPsi\geq K_{1}\rho^{2}$ in $\overline{M^{\delta}_{T}}$ and
\begin{equation}
	\label{eq3-5}
	\mathcal{L} \varPsi \leq - K_{1} \Big(1 + f_{i}|\lambda _{i}|+ \sum f_{i}+F^{\tau}\Big)  \;\;
	\mbox{in $\overline{M^{\delta}_{T}}$}.
\end{equation}
\end{lem}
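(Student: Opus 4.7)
The plan is to choose the constants in the order $\delta, t$ small, then $A_3, A_2, A_1$ large, and play the three summands of $\Psi$ against each other, with a sharpened form of Lemma~\ref{lemma2.1} furnishing the dominant negative contribution. First I would apply the perturbation construction from the paragraph preceding the lemma to replace $\ul u$ by a strict admissible subsolution on a neighborhood of $x_0$; by Remark~\ref{rk1} this eliminates the case dichotomy in Lemma~\ref{lemma2.1}. Combining Lemma~\ref{GSSlem} in case~(i) with the cone comparability \eqref{case 2} in case~(ii) moreover yields the strengthened bound
$$
\mathcal{L}\eta \geq \epsilon_0\bigl(1 + \textstyle\sum f_i|\lambda_i| + \sum F^{ii} + F^{\tau}\bigr),
$$
since in case~(i) the Guan--Shi--Sui inequality produces an extra $\sum f_i\lambda_i^+$, together with the identity $\sum f_i|\lambda_i| = 2\sum f_i\lambda_i^+ - \sum f_i\lambda_i$ and the a priori bound on $\sum f_i\lambda_i = F^{ij}U_{ij}$ coming from the equation, while in case~(ii) all $f_i$ are comparable to $\sum f_j$ so $\sum f_i|\lambda_i|$ is already controlled by $\sum F^{ii}$. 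Multiplying by $A_1$ gives the principal negative term $\mathcal{L}(A_1v) \leq -A_1\epsilon_0(1 + \sum f_i|\lambda_i| + \sum F^{ii} + F^{\tau})$.

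Next I would dispose of the other two summands. The smoothness of $\rho$ in $\overline{M^\delta_T}$ for $\delta$ small gives $\mathcal{L}(A_2\rho^2) \leq A_2 C_*(1 + \sum F^{ii} + F^{\tau})$. For the tangential correction, write $w_l := \nabla_l(u-\varphi)$ and expand via the product rule
$$
\mathcal{L}(w_l^2) = 2w_l\mathcal{L}w_l + 2F^{ij}\nabla_i w_l\nabla_j w_l;
$$
the global gradient bound yields $|w_l| \leq C$, the differentiated-equation inequality \eqref{eq3-1} (applied with both $\pm e_l$) gives $|\mathcal{L}w_l| \leq C_*(1+\sum f_i|\lambda_i| + \sum F^{ii} + F^{\tau})$, and the non-negative quadratic term enters $\Psi$ with a $-A_3$ and may be discarded in the upper bound. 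Hence
$$
\mathcal{L}\Bigl(-A_3\textstyle\sum_{l<n}w_l^2\Bigr) \leq A_3 C_*\bigl(1 + \textstyle\sum f_i|\lambda_i| + \sum F^{ii} + F^{\tau}\bigr).
$$

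Summing, the coefficient of $(1 + \sum f_i|\lambda_i| + \sum F^{ii} + F^{\tau})$ in $\mathcal{L}\Psi$ is at most $-A_1\epsilon_0 + (A_2+A_3)C_*$, which can be forced $\leq -K_1$ by choosing $A_1$ last and large relative to the previously fixed $A_2, A_3, K_1$, establishing \eqref{eq3-5}. For $\Psi \geq K_1\rho^2$ I would use that $\ul u \leq u$ by the parabolic comparison principle, so $v = 1-e^{K(\ul u - u)} \geq 0$, and that since $u=\varphi$ on $SM_T$ the tangential derivatives $w_l$ vanish on $SM_T\cap\overline{M^\delta_T}$, whence $|w_l|^2 \leq C\rho^2$ by Taylor expansion for $\delta$ small; then $\Psi \geq (A_2 - CA_3)\rho^2 \geq K_1\rho^2$ once $A_2$ is taken large after $A_3$. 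The main obstacle is the very first step: obtaining the $\sum f_i|\lambda_i|$ term on the right of the subsolution inequality, since without this refinement the $A_3 C_*\sum f_i|\lambda_i|$ produced by the tangential correction cannot be absorbed by the $\mathcal{L}(A_1 v)$ term no matter how large $A_1$ is.
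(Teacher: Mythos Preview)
You have correctly identified the obstacle, and it is fatal to your approach as written: the strengthened subsolution inequality
\[
\mathcal{L}\eta \geq \epsilon_0\Bigl(1 + \sum f_i|\lambda_i| + \sum F^{ii} + F^{\tau}\Bigr)
\]
simply does not hold. In case~(ii) the comparability $f_i \geq c\sum f_j$ gives no upper bound on $\sum f_i|\lambda_i|$ in terms of $\sum F^{ii}$ without already knowing $\sum|\lambda_i|$ is bounded. In case~(i) the Guan--Shi--Sui inequality \eqref{GSS} yields only the term $\epsilon(1+\sum f_i)$, not $\sum f_i\lambda_i^+$; indeed $F^{ij}(\ul U_{ij}-U_{ij}) \approx -\sum f_i\lambda_i$ up to bounded terms, which is very \emph{negative} when some $\lambda_i$ is large and positive, so it cannot possibly dominate $\sum f_i\lambda_i^+$.

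The paper resolves this not by strengthening Lemma~\ref{lemma2.1} but by \emph{keeping} the quadratic term you discarded. Writing $w_l=\nabla_l(u-\varphi)$, the identity $\mathcal{L}(w_l^2)=2w_l\mathcal{L}w_l+2F^{ij}\nabla_i w_l\nabla_j w_l$ contributes, after summation over $l<n$ and an algebraic lemma (Proposition~2.19 of \cite{Guan.2014}),
\[
\sum_{l<n} F^{ij}\nabla_i w_l\,\nabla_j w_l \;\geq\; \tfrac12\sum_{i\neq r} f_i\lambda_i^2
\]
for some index $r$. This quadratic-in-$\lambda$ negative term in $\mathcal{L}\Psi$ absorbs $\sum_{i\neq r}f_i|\lambda_i|$ via Cauchy--Schwarz (inequality \eqref{uii-s-12}). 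The single remaining term $f_r|\lambda_r|$ is handled by splitting $A_1=(A_1-B)+B$ and using, on the $B$-portion, the \emph{lower} bound $\mathcal{L}v\geq a_0\sum f_i\lambda_i - C(1+\sum F^{ii}+F^\tau)$ coming directly from the concavity of $F$, together with a case split on the sign of $\lambda_r$. So the term $-A_3\sum_{l<n}|\nabla_l(u-\varphi)|^2$ is not a nuisance to be bounded from above; it is the source of the $\sum f_i|\lambda_i|$ control that your argument is missing.
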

\begin{proof}
First by Lemma~\ref{lemma2.1}, we have
\begin{equation}\label{uii-s-10}
	\mathcal{L}v \leq -\varepsilon\Big(1+\sum f_{i}+F^{\tau}\Big) \;\mbox{in} \, M_{T}^{\delta}.
\end{equation}

Similar to Proposition 2.19 of \cite{Guan.2014}, we can show that
\begin{equation}
	\label{jbd-4}
	\sum_{l < n} F^{ij} U_{il} U_{jl} \geq \frac{1}{2} \sum_{i \neq r} f_i \lambda _i^2,
\end{equation}
for some index $r$.
It follows that
\begin{equation}
	\label{bd-5}
	\begin{aligned}
		\sum_{l < n} \mathcal{L} |\nabla_l (u - \varphi)|^2
		\,& \geq  \sum_{l < n} F^{ij} U_{il} U_{jl} - C \Big(1 + \sum f_i |\lambda _i|+ \sum F^{ii}+F^{\tau}\Big)\\
		\,& \geq \frac{1}{2} \sum_{i \neq r} f_i \lambda _i^2- C \Big(1 + \sum f_i |\lambda _i|+ \sum F^{ii}+F^{\tau}\Big).
	\end{aligned}
\end{equation}

We first consider the case that $\lambda _{r} \geq 0$.
Notice that
$$
\begin{aligned}
	\mathcal{L}v=-Le^{K(\ul u-u)}\,&=-Ke^{K(\ul u-u)}[\mathcal{L}(\ul u-u)+KF^{ij}D_{i}(\ul u-u)D_{j}(\ul u-u)]\\
	\,&\geq a_{0}\sum f_{i}\lambda_{i}-C(1+\sum F^{ii}+F^{\tau}),
\end{aligned}
$$
where $a_{0}=\inf_{\mathcal{P} M_T}Ke^{K(\ul u-u)}$.

By  \eqref{uii-s-10}, \eqref{jbd-4} and \eqref{bd-5},
we obtain, for any $0 < B < A_{1}$,
\begin{equation}\label{uii-s-11}
	\begin{aligned}
		\mathcal{L} \varPsi \leq\,& (A_{1}+B)\mathcal{L}v-B\mathcal{L}v+CA_{2}\Big(1+\sum f_{i}+F^{\tau}\Big)
		-\frac{A_{3}}{2}\sum_{i \neq r}f_{i}\lambda _{i}^{2}\\
		\,&+CA_{3}\Big(1+f_{i}|\lambda _{i}|
		+\sum f_{i}+F^{\tau}\Big)\\
		\leq\,&-(A_{1}+B)\varepsilon\Big(1+\sum f_{i}+F^{\tau}\Big)-a_{0}Bf_{i}\lambda _{i}
		+CA_{3}f_{i}|\lambda _{i}|\\
		\,&-\frac{A_{3}}{2}\sum_{i \neq r}f_{i}\lambda _{i}^{2}+C(B+A_{2}+A_{3})\Big(1+\sum f_{i}+F^{\tau}\Big)\\
		\leq\,&-(A_{1}+B)\varepsilon\Big(1+\sum f_{i}+F^{\tau}\Big)
		+2a_{0}B\sum_{i \neq r}f_{i}|\lambda _{i}|-\frac{A_{3}}{2}\sum_{i \neq r}f_{i}\lambda _{i}^{2}\\
		\,&-(a_{0}B-CA_{3})f_{i}|\lambda _{i}|
		+C(B+A_{2}+A_{3})\Big(1+\sum f_{i}+F^{\tau}\Big).
	\end{aligned}
\end{equation}
Notice that
\begin{equation}\label{uii-s-12}
	\frac{A_{3}}{2}\sum_{i \neq r}f_{i}\lambda _{i}^{2} \geq 2a_{0}B
	\sum_{i \neq r}f_{i}|\lambda _{i}|-\frac{2(a_{0}B)^{2}}{A_{3}}\sum f_{i}.
\end{equation}
Thus, we derive from \eqref{uii-s-11} and \eqref{uii-s-12} that
\begin{equation}\label{uii-s-13}
	\begin{aligned}
		\mathcal{L}\varPsi \leq\,& -(A_{1}+B)\varepsilon\Big(1+\sum f_{i}+F^{\tau}\Big)
		-(a_{0}B-CA_{3})f_{i}|\lambda _{i}|\\
		\,&+C(B+A_{2}+A_{3})\Big(1+\sum f_{i}+F^{\tau}\Big)+\frac{2(a_{0}B)^{2}}{A_{3}}\sum f_{i}.
	\end{aligned}
\end{equation}

If $\lambda _{r}< 0$, similarly to \eqref{uii-s-13},
we have
\begin{equation}\label{uii-s-14}
	\begin{aligned}
		\mathcal{L}\varPsi \leq\,& -(A_{1}+B)\varepsilon\Big(1+\sum f_{i}+F^{\tau}\Big)
		-(a_{1}B-CA_{3})f_{i}|\lambda _{i}|\\
		\,&+C(B+A_{2}+A_{3})\Big(1+\sum f_{i}+F^{\tau}\Big)+\frac{2(a_{1}B)^{2}}{A_{3}}\sum f_{i},
	\end{aligned}
\end{equation}
where $a_{1}=\sup_{\mathcal{P} M_T}Ke^{K(\ul u-u)}$.

Checking \eqref{uii-s-13} and \eqref{uii-s-14}, we can choose
$A_{1}\gg A_{2}\gg A_{3}\gg 1$ and $A_{1}-B\gg a_{1} B\geq a_{0} B\gg A_{2}\gg A_{3}$ in \eqref{uii-s-13} and \eqref{uii-s-14}
such that \eqref{eq3-5} holds and $\varPsi \geq K_{1}\rho ^{2}$ in $M_{T}^{\delta}$.
\end{proof}

By \eqref{eq3-1} and \eqref{eq3-5}, we can use Lemma~\ref{barrier} to choose suitable $\delta$, $N$
and $A_{1}\gg A_{2}\gg A_{3}\gg 1$ such that in $M_{T}^{\delta}$,
$\mathcal{L}(\varPsi \pm \nabla_{\alpha}(u-\phi))\leq 0$, and $\varPsi \pm \nabla_{\alpha}(u-\phi)\geq 0$
on $\mathcal{P} M_{T}^{\delta}$. Then it follows from the maximum principle that
$\varPsi \pm \nabla_{\alpha}(u-\phi)\geq 0$ in $M_{T}^{\delta}$ and therefore
\begin{equation}
\label{pbs-E130'}
|\nabla_{n\alpha} u (x_0, t_0)| \leq \nabla_n \varPsi (x_0, t_0) \leq C,
\;\; \forall \; \alpha < n.
\end{equation}

It remains to show that
\begin{equation}
\label{unn-0}
\nabla_{nn} u (x_{0}, t_{0}) \leq C
\end{equation}
since $\triangle u -u_{t}+tr A> 0$. We shall use an idea of Trudinger \cite{Trudinger.1995}
to prove that there exist uniform positive constants $c_{0}$, $R_{0}$ such that for all $R > R_{0}$,
$(\lambda'[U], R, -u_{t}) \in \Gamma$ and
$$
f(\lambda'[U], R, -u_{t}) \geq \psi[u]+c_{0}\,\,\mbox{on}\;\overline{SM_{T}},
$$
which implies \eqref{unn-0} by Lemma 1.2 in \cite{CaffarelliNirenbergSpruck.1985}, where $\lambda'[U]=(\lambda_{1}', \ldots, \lambda_{n-1}')$
denote the eigenvalues of the $(n-1) \times (n-1)$ matrix
$\{U_{\alpha \beta} \}_{1 \leq \alpha, \beta \leq (n-1)}$ and $\psi [u] = \psi (\cdot, \cdot, \nabla u)$.
Define
\[\widetilde{F} (U_{\alpha \beta},-u_{t}) \equiv \lim_{R \rightarrow + \infty}
f (\lambda' (\{U_{\alpha \beta}\}), R, -u_{t})\]
and consider
\[m \equiv \min_{(x, t) \in \overline{S M_T}}
\Big(\widetilde{F} (U_{\alpha \beta} (x, t), -u_{t}(x, t))  - \psi [u] (x, t)\Big).
\]
Note that $\widetilde{F}$ is concave and $m$ is monotonically increasing with respect to $R$, and that
\[
c \equiv
\min_{(x, t) \in \overline{S M_T}}
\Big(\widetilde{F} (\ul U_{\alpha \beta} (x, t), - \ul u_t (x, t))  - \psi [\ul u] (x, t)\Big)>0
\]
when $R$ is sufficiently large.

We shall show $m > 0$ and
we may assume $m < c/2$ (otherwise we are done) and
suppose $m$ is achieved at a point $(x_{0}, t_{0}) \in \overline{S M_T}$.
Choose local orthonormal frames around $x_0$ as before and assume
$\nabla_{n n} u (x_0, t_0) \geq \nabla_{n n} \ul u (x_0, t_0)$.
Let $\sigma_{\alpha {\beta}} = \langle \nabla_{\alpha} e_{\beta}, e_n \rangle$
and
\[\widetilde{F}^{\alpha\beta}_{0} = \frac{\partial \widetilde{F}}{\partial r_{\alpha\beta}}
(U_{\alpha\beta} (x_{0},t_{0}), -u_{t}(x_{0},t_{0})),\]
\[\widetilde{F}^{\tau}_{0} = \frac{\partial \widetilde{F}}{\partial \tau}
(U_{\alpha\beta} (x_{0},t_{0}), -u_{t}(x_{0},t_{0})).\]
Note that
$\sigma_{\alpha \beta} = \varPi (e_\alpha, e_\beta)$ on $\partial M$
and by \eqref{hess-a200}, we have,
at $(x_0, t_0)$,
\begin{equation}
\label{pbs-c-225}
\begin{aligned}
	\nabla_n (u - \ul{u}) \widetilde{F}^{\alpha \beta}_0 \sigma_{\alpha {\beta}}
	\geq \,& \widetilde{F}(\ul{U}_{\alpha {\beta}},-\ul u_{t}) - \widetilde{F}(U_{\alpha {\beta}}, -u_{t})
	+\widetilde{F}^{\tau}_{0}(\ul u_{t}-u_{t})\\
	\,&  + \widetilde{F}^{\alpha {\beta}}_0 (A^{\alpha \beta}[u] - A^{\alpha \beta}[\ul u]) \\
	\geq \,& \frac{c}{2} + H [u] - H[\ul u]\\
	\geq \,& \frac{c}{2}+H_{p_{n}} \nabla_n (u - \ul{u}),
\end{aligned}
\end{equation}
where $H [u] = \widetilde{F}^{\alpha {\beta}}_0 A^{\alpha \beta} [u] - \psi [u]$.
The last inequality is from the regularity of $-A$ and the convexity of $\psi$ with respect to $p$.

Note that $-A$ is regular, which means $A^{\alpha\beta}$ is concave respect to $p_{n}$ and $\ul u$ is strict subsolution near the boundary, we have $H_{p_{n}{p_{n}}}\leq 0$ and
$$0<\nabla_{n}(u-\ul u)<c_{4}$$
for some positive constant $c_{4}$.
It follows from \eqref{pbs-c-225} that, at $(x_{0}, t_{0})$,
\begin{equation}\label{n1}
\kappa-H_{p_{n}}\geq \frac{c}{2c_{4}}>0,
\end{equation}
where $\kappa = \widetilde{F}^{\alpha {\beta}}_0 \sigma_{\alpha {\beta}}$.

Let $\vartheta(x,t)=\kappa(x,t)-H_{p_{n}}(x,t,\nabla'\varphi(x,t), \nabla_{n}u(x_{0},t_{0}))$.
Since $\nabla_{\alpha}u=\nabla_{\alpha}\ul u=\nabla_{\alpha}\varphi$ on $\overline{SM_{T}}$, we derive
\begin{equation}\label{n2}
\vartheta(x,t)>c_{5}\;\; \mbox{on}\; \partial M^{\delta}_{T}\cap \overline{SM_{T}}
\end{equation}
for some small positive constant $c_{5}$, where $\nabla' \varphi=(\nabla_{1}\varphi, \ldots, \nabla_{n-1}\varphi)$.

Next, since $H$ is concave with respect to $p_{n}$, we have
\begin{equation}\label{n3}
\begin{aligned}
	\,&H(x,t,\nabla'\varphi, \nabla_{n}u(x_{0},t_{0}))-H(x,t,\nabla'\varphi, \nabla_{n}u)\\
	\geq\,& H_{p_{n}}(x,t,\nabla'\varphi, \nabla_{n}u(x_{0},t_{0}))(\nabla_{n}u(x_{0},t_{0})-\nabla_{n}u)
\end{aligned}
\end{equation}
on $\overline{SM_T}$.

On the other hand, since $u_t = \ul u_t = \varphi_t$ on $\overline{SM_T}$,
by the concavity of $\widetilde{F}$, we have
\begin{equation}
\label{pbs-c-210}
\begin{aligned}
	\,& H(x,t,\nabla'\varphi, \nabla_{n}u(x,t))-H(x_{0},t_{0},\nabla'\varphi(x_{0},t_{0}), \nabla_{n}u(x_{0},t_{0}))\\
	\,&+\widetilde{F}^{\alpha {\beta}}_0(\nabla_{\alpha\beta}u-\nabla_{\alpha\beta}u(x_{0},t_{0}))
	+\widetilde{F}^{\tau}_{0} \varphi_t-\widetilde{F}^{\tau}_{0} \varphi_t (x_0, t_0)\\
	=\,&\widetilde{F}^{\alpha {\beta}}_0 U_{\alpha  {\beta}} - \psi [u] - \widetilde{F}^{\tau}_{0}u_t
	- \widetilde{F}^{\alpha {\beta}}_0 U_{\alpha  {\beta}} (x_0, t_0) + \psi [u] (x_0, t_0)
	+\widetilde{F}^{\tau}_{0} u_t (x_0, t_0)\\
	\geq \,& \widetilde{F} (U_{\alpha  {\beta}}, -u_{t}) - \psi [u] - m \geq 0
\end{aligned}
\end{equation}
on $\overline{SM_T}$.
It follows from \eqref{hess-a200}, \eqref{n3} and \eqref{pbs-c-210} that
\begin{equation}\label{n4}
\begin{aligned}
	\,&  -\vartheta(\nabla_{n}(u-\varphi)-\nabla_{n}(u-\varphi)(x_{0},t_{0}))
	\\
	\geq\,&  \widetilde{F}^{\alpha \beta}[\nabla_{n}(u-\varphi)(x_{0},t_{0})(\sigma_{\alpha\beta}(x_{0},t_{0})-\sigma_{\alpha\beta})
	+\nabla_{\alpha\beta}\varphi(x_{0},t_{0})-\nabla_{\alpha\beta}\varphi]\\
	\,&  +H(x,t,\nabla'\varphi, \nabla_{n}u(x_{0},t_{0}))-H(x_{0},t_{0},\nabla'\varphi(x_{0},t_{0}), \nabla_{n}u(x_{0},t_{0}))\\
	\,&  +H_{p_{n}}(x,t,\nabla'\varphi, \nabla_{n}u(x_{0},t_{0}))(\nabla_{n}\varphi(x_{0},t_{0})-\nabla_{n}\varphi)
	+\widetilde{F}^{\tau}_{0} \varphi_t (x_0, t_0)\\
	\,&  -\widetilde{F}^{\tau}_{0} \varphi_t\\
	:=  \,&  \Theta(x,t).
\end{aligned}
\end{equation}
From the form of the function $\Theta(x,t)$ in \eqref{n4}, since $\Theta(x_{0},t_{0})=0$, we have,
on $\partial M^{\delta}_{T}\cap \overline{SM_{T}}$,
\begin{equation}\label{n5}
\begin{aligned}
	\nabla_{n}(u-\varphi)-\nabla_{n}(u-\varphi)(\tilde{x}_{0})
	\leq \vartheta^{-1}\Theta(x,t)\\
	\leq \emph{l}(\tilde{x}-\tilde{x}_{0})+\tilde{C}(\rho^{2}+(t-t_{0})^{2}),
\end{aligned}
\end{equation}
where $\tilde{x}=(x,t)$, $\emph{l}$  is a linear function of $\tilde{x}-\tilde{x}_{0}$ with $\emph{l}(0)=0$, and the constant $C$ depends on $|u|_{C^{1}}$ and other known data.

Define
\[
\varPhi =  \nabla_{n}(u-\varphi)-\nabla_{n}(u-\varphi)(\tilde{x}_{0})-\emph{l}(\tilde{x}-\tilde{x}_{0})-\tilde{C}(t-t_{0})^{2}.
\]
By extending $\varphi$ smoothly to the interior near the boundary to be constant in the normal direction, By \eqref{eq3-1}, we have
\[
\mathcal{L}\varPhi\leq C(1+\sum f_{i}+\sum f_{i}|\lambda_{i}|+F^{\tau}).
\]

We see from \eqref{pbs-c-210} and \eqref{hess-a200} that
$\varPhi \geq 0$ on $\overline{S M_T}$ and $\varPhi (x_0, t_0) = 0$.
Therefore, by the compatibility condition \eqref{comp2}, we have, when $\delta$
is sufficiently small, $\varPsi \geq 0$ on $\mathcal{P}M_{\delta}$.

Therefore, by Lemma~\ref{barrier}, we can choose suitable $\varPsi$ such that
\begin{equation}
\label{pbs-cma-106}
\left\{ \begin{aligned}
	\mathcal{L} (\varPsi-\varPhi ) \leq  0 \;\; &\mbox{ in $M^{\delta}_{T}$},  \\
	\varPsi-\varPhi \geq 0 \;\; &\mbox{ on $\mathcal{P} M^{\delta}_{T}$}.
\end{aligned} \right.
\end{equation}
By the maximum principle we find $\varPsi\geq\varPhi$ in $M^{\delta}_{T}$.
It follows that $\nabla_n \varPhi (x_0, t_0) \leq  \nabla_n \varPsi (x_0, t_0) \leq C$.


Therefore, we have an {\em a priori}
upper bound for all eigenvalues of $\{U_{ij} (x_0, t_0)\}$ and hence its eigenvalues are contained in a compact
subset of $\Gamma$ by \eqref{f5},
and we see $m > 0$ by \eqref{f1}.

Consequently, there exist positive $c_{6}$ and $R_{0}$ such that
$$
(\lambda'(\widetilde{U}(x,t)), R, -u_{t}(x, t))\in \Gamma
$$
and
\[
f(\lambda'(\widetilde{U}(x,t)), R, -u_{t}(x, t)) \geq \psi(x, t)+c_{6}
\]
for all $R > R_{0}$ and $(x, t)\in \overline{SM_{T}}$

For $i=1,\ldots, n-1$, Lemma 1.2 in \cite{CaffarelliNirenbergSpruck.1985} means $\lambda'_{i}=\lambda_{i}+o(1)$ if $|U_{nn}|$ tends to infinity. Therefore, we have
\[
f(\lambda(U), -u_{t})>\psi
\]
for unbounded $|U_{nn}|$, which leads a contradiction and therefore \eqref{unn-0} holds.




\end{document}